%%%%%%%Article Class%%%%%%%%%%
%\documentclass[12pt]{article}
\documentclass[a4paper, 12pt]{amsart}
\setlength{\parskip}{6pt}

%%%%%%%Packages%%%%%%%%%%%%%%%
\usepackage{fullpage}
\usepackage{graphicx}
\usepackage{tikz}
\usepackage{amsmath,amsthm, amsfonts, amssymb}
\usepackage{mathrsfs}
\usepackage{enumitem}
\usepackage[english]{babel}
\usepackage [autostyle, english = american]{csquotes}
\MakeOuterQuote{"}
\usepackage{tikz-cd}
\usetikzlibrary{cd}
\usepackage{cite}
\usepackage[hidelinks]{hyperref}
\usepackage{amsmath}
\usepackage{xcolor}
\usepackage{verbatim}

%%%%%%%Bibliography Settings%%%%
\bibliography{bib.bib}
\bibliographystyle{siam}

%%%%%%Important Sets%%%%%%%%%%%%%

\newcommand{\naturals}{\mathbb{N}}
\newcommand{\complex}{\mathbb{C}}

%%%%%%Regular calligraphic%%%%%%%%
\newcommand{\A}{\mathcal{A}}
\newcommand{\B}{\mathcal{B}}

\newcommand{\M}{\mathcal{M}}
\newcommand{\N}{\mathcal{N}}

\newcommand{\U}{\mathcal{U}}

%%%%%%%%Theorems%%%%%%%%%%%%%%%
\newtheorem{thrm}{Theorem}[section]
\newtheorem{lemma}[thrm]{Lemma}

\newtheorem{cor}[thrm]{Corollary}

\theoremstyle{definition}

\newtheorem{example}[thrm]{Example}
\newtheorem{remark}[thrm]{Remark}

\newtheoremstyle{named}{}{}{\itshape}{}{\bfseries}{.}{.5em}{\thmnote{#3's }#1}
\theoremstyle{named}

%%%%%%%Document body%%%%%%%%%%%

%%%%%%%Author Information%%%%%%%%%%
\begin{document}
\title{A Class of Freely Complemented von Neumann Subalgebras of $L\mathbb{F}_n$}
\author{Nicholas Boschert, Ethan Davis, AND Patrick Hiatt}

\address{Department of Mathematics, UCLA, Los Angeles, CA 90095}
\email{nickboschert@math.ucla.edu} 

\address{Department of Mathematics\\California Institute of Technology, Pasadena, CA 91125}
\email{erdavis@caltech.edu}

\address{Department of Mathematics, University of California, San Diego, San Diego, CA 92093}
\email{phiatt@ucsd.edu}

\begin{abstract}    We prove that if $A_1, A_2, \dots, A_n$ are tracial abelian von Neumann algebras for $2\leq n \leq \infty$ and $M = A_1 * \cdots * A_n$ is their free product, then any subalgebra $A \subset M$ of the form $A = \sum_{i=1}^n u_i A_i p_i u_i^*$, for some projections $p_i \in A_i$ and unitaries $u_i \in \mathcal{U}(M)$, for $1 \leq i \leq n$, such that $\sum_i u_i p_i u_i^* = 1$, is freely complemented (FC) in $M$. Moreover, if $A_1, A_2, \dots, A_n$ are purely non-separable abelian, and $M = A_1 * \cdots * A_n$,  then any purely non-separable singular MASA in $M$ is FC. We also show that any of the known maximal amenable MASAs $A\subset L\mathbb{F}_n$ (notably the radial MASA), satisfies Popa's \textit{weak FC conjecture}, i.e., there exist  
Haar unitaries $u\in L\mathbb{F}_n$ that are free independent to $A$. 
\end{abstract}

\maketitle

%%%%%%%%%%%%%%%%%%%%%%%%%%%%%%%%%%%%%%%%%%%%%%%%%%%%%%%%%%%%%%%%%%%%%%%%%%%%%%%%%%%%%%%%%%%%%%%%%%%%%%%%%%%%%%%%%%%%%%%%%%%%%%%%%%%%%%%%%%%%%%%%%%%%%%%
\section{Introduction}

A subalgebra $N$ of a II$_1$ factor $M$ is  \textit{freely complemented}, abbreviated as FC, if there exists a (non-trivial) von Neumann subalgebra 
$Q \subset M$ freely independent to $N$ with $M = N \vee Q$, 
in other words if the inclusion $N\subset M$ is isomorphic to an inclusion of the form $N\subset N*Q$. 
For example, in the free group factor $L\mathbb{F}_n$ generated by freely independent Haar unitaries $u_1, u_2, \dots, u_n$, each of the generator 
maximal abelian $^*$-subalgebras (abbreviated hereafter as {\it MASA}) $A_i := \{u_i\}''$, for $1\leq i\leq n$, is FC. Along these lines, Jekel \cite{J22} proved that some additive perturbations of these generators remain FC. A result in \cite{D97} shows that if $N$ is any diffuse amenable tracial von Neumann algebra and $Q$ is either a free group factor $L\mathbb{F}_n$, for $n \geq 2$, or a diffuse amenable tracial von Neumann algebra, then $N * Q$ is isomorphic to either $L\mathbb{F}_{n+1}$ or $L\mathbb{F}_2$, respectively. Thus, for any $n \geq 2$ and any diffuse amenable tracial von Neumann algebra $N$, there exists an FC embedding of $N$ into $L\mathbb{F}_n$. 

By well known results in \cite{P83}, if $M$ is any II$_1$ factor, then any diffuse amenable subalgebra $B \subset M$ that's freely complemented in $M$ is maximal amenable in $M$, i.e., is contained in no larger amenable von Neumann subalgebra of $M$. In particular, each of the generator MASAs $A_i$ in $L\mathbb{F}_n$ mentioned earlier is maximal amenable. The problem of whether 
conversely any maximal amenable MASA of $L\mathbb{F}_n$ is in fact of this form, and more generally whether any maximal amenable $B\subset L\mathbb{F}_n$ is of the form $B\subset B*Q$, 
which can be viewed as going back to \cite{P83}, has been largely emphasized in recent years by Popa (see Section 5.2 in \cite{P21}, \cite{P23a}, \cite{P23b}; cf. also \cite{BP23}, \cite{BDIP23}) in connection 
with the recent resolution of the Peterson-Thom conjecture in (\cite{H22}, \cite{BeCa}, \cite{BoCo}; see the introduction to \cite{BP23} for a discussion on this).

There are several classes of MASAs in $L\mathbb{F}_n$ that are known to be maximal amenable but for which the FC property is not known (cf. page 6 and 7 in \cite{P23b}): 
$(a)$ The MASA $A_g=\{u_g\}''$  coming from elements $g\in \mathbb{F}_n$ with the cyclic group $g^{\mathbb Z}$ maximal abelian but not freely complemented in $\mathbb{F}_n$, 
which was shown to be maximal amenable in (\cite{P83}, see Remark 3.5.1); 
$(b)$ The radial MASA introduced in \cite{Py81} and shown to be maximal amenable in \cite{CFRW09}; $(c)$ The continuous families of maximal amenable MASAs  in \cite{BP23}.  
Moreover, it was pointed out in (Section 5.5 of \cite{P21}, see also \cite{P23a}, \cite{P23b} and 4.1 in \cite{BDIP23}) that if $\{A_i\}_i$ are disjoint maximal amenable MASAs in a II$_1$ factor 
$M$, then given any projections $p_i \in A_i$ and unitaries $u_i\in M$ with the property that $\sum_i u_ip_iu_i^*=1$, the MASA $A=\sum_i u_i A_ip_i u_i^*$ is maximal amenable in $M$ as well. 
When $M=L\mathbb{F}_n$, this gives an additional class of examples for which the FC property is particularly interesting to investigate. 

As highlighted in \cite{P23b}, the most important, basic such case is when the $A_i$ are all FC, in particular, when $M=A_1* \cdots *A_n$. 
We solve here this latter case proving that, indeed, any MASA ``reassembled'' out of such $A_i$ is freely complemented in $M$. Our result does not in fact depend on the nature of the abelian 
algebras $A_i$, which are allowed to be non-separable as well. 

\begin{thrm}\label{thmA}
    For $2 \leq n \leq \infty$, let $A_1, A_2, \dots, A_n$ be abelian tracial von Neumann algebras, and let $M = A_1 * \cdots *A_n$. Let $p_i \in A_i$, for $1\leq i\leq n$, be projections and $u_i \in \mathcal{U}(M)$, for $1 \leq i \leq n$, be unitaries with $\sum_{i=1}^n u_i p_i u_i^*=1$. Then  $\mathcal{A} =  \sum_{i=1}^n u_i A_ip_i u_i^*$ is freely complemented in $M$.
\end{thrm}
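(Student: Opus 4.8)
The plan is to build the free complement by \emph{reconstructing the free-product decomposition of $M$ adapted to the distinguished partition of unity of $\mathcal{A}$}, proceeding by induction on the number of ``pieces'' of $\mathcal{A}$. First, a normalization: if $e_i:=u_ip_iu_i^*$, then from $\sum_j e_j=1$ one gets $e_i=e_i(\sum_je_j)e_i=e_i+\sum_{j\neq i}(e_je_i)^*(e_je_i)$, so each $e_je_i=0$; thus the $e_i$ are mutually orthogonal projections summing to $1$, $\tau(e_i)=\tau(p_i)$, and $\mathcal{A}=\bigoplus_i\mathcal{A}_i$ with $\mathcal{A}_i:=e_i\mathcal{A}e_i=u_i(A_ip_i)u_i^*\subset e_iMe_i$. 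It then suffices to exhibit $Q\subset M$ free from $\mathcal{A}$ with $\mathcal{A}\vee Q=M$ (equivalently, a trace‑preserving surjection $\mathcal{A}*Q_0\twoheadrightarrow M$ restricting to $\mathrm{id}_\mathcal{A}$), and the point of the induction is that $Q$ gets assembled from one new block at each peeling step.

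The engine is the description of corners of free products of abelian algebras (and of their compressions) as again free products, going back to Voiculescu and made explicit by Dykema: if $N=A*B$ with $A$ abelian and $q\in A$ a projection, then $qNq=(Aq)*\mathcal{R}$ for a subalgebra $\mathcal{R}\subset qNq$ freely independent from $Aq$, where $\mathcal{R}$ is itself generated by a free family built out of the ``paths'' $qBq,\ qB(1-q)Bq,\dots$ — concretely, $\mathcal{R}$ is a free product of a compression of $B$ with finitely many free Haar unitaries. Crucially this decomposition is purely combinatorial in the trace: it needs no factoriality, diffuseness, or separability, which is exactly why the conclusion will not depend on the nature of the $A_i$.

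For the inductive step, conjugate the given decomposition by $u_1$ to write $M=(u_1A_1u_1^*)*(u_1\widehat{A_1}u_1^*)$ with $\widehat{A_1}:=A_2*\cdots*A_n$; the projection $e_1=u_1p_1u_1^*$ lies in the abelian factor $u_1A_1u_1^*$. Applying the corner lemma at $e_1$ gives $e_1Me_1=\mathcal{A}_1*\mathcal{R}_1$, and applying it at $1-e_1$ gives $(1-e_1)M(1-e_1)=\mathcal{B}_1*\mathcal{S}_1$ where $\mathcal{B}_1:=u_1(A_1p_1^\perp)u_1^*$ and $\mathcal{R}_1,\mathcal{S}_1$ are free products of compressions of $u_1\widehat{A_1}u_1^*$ with free Haar unitaries; together with the linking elements furnished by the free‑product structure (the off‑diagonal spaces $e_1M(1-e_1)$), these rebuild $M$. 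Since each compression $q A_i q$ is again abelian and each free Haar unitary generates a copy of $L\mathbb{Z}$, one recognizes $(1-e_1)M(1-e_1)$ as a free product of abelian algebras $C_1*\cdots*C_m$ in which $\mathcal{B}_1$ and the auxiliary $L\mathbb{Z}$'s appear as ``extra'' factors, and in which each $\mathcal{A}_i$ ($i\geq2$) is a unitary conjugate (inside $(1-e_1)M(1-e_1)$) of a corner of the abelian factor $\widetilde{A_i}$ coming from $A_i$; moreover $\sum_{i\geq2}e_i=1-e_1$. Thus $\mathcal{A}':=\bigoplus_{i\geq2}\mathcal{A}_i\subset(1-e_1)M(1-e_1)$ is precisely of the form treated by a suitably strengthened induction hypothesis (one allowing more free factors than pieces of $\mathcal{A}$). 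The hypothesis yields a complement $Q'$ of $\mathcal{A}'$, free from $\mathcal{A}'$ in $(1-e_1)M(1-e_1)$; then $Q:=\mathcal{R}_1\vee Q'\vee(\text{linking partial isometries})\vee(\text{auxiliary Haar unitaries})\vee\mathcal{B}_1$ is the candidate complement of $\mathcal{A}$ in $M$. The base case ($\mathcal{A}$ a single piece, hence forced to be a unitary conjugate of one full abelian free factor) is immediate, and $n=\infty$ follows by passing to the inductive limit.

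The routine part is the corner‑lemma bookkeeping — identifying the factors $C_j$, tracking how the original $p_i$ sit inside the compressions $\widetilde{A_i}$, and checking the size/normalization constraints. The genuine obstacle is \emph{simultaneous freeness}: each ingredient of $Q$ is constructed to be free from only one layer of $\mathcal{A}$ ($\mathcal{R}_1$ from $\mathcal{A}_1$ inside $e_1Me_1$, $Q'$ from $\mathcal{A}'$ inside $(1-e_1)M(1-e_1)$, the linking elements and Haar unitaries being ``fresh''), and one must verify that after reassembling the corners into $M$ these remain \emph{jointly} free from $\mathcal{A}=\mathcal{A}_1\oplus\mathcal{A}'$ — i.e.\ that alternating centered words in $\mathcal{A}$ and $Q$ still have trace zero and no hidden correlations across the two corners survive. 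Making this work forces the inductive statement to carry along the extra abelian summands and the auxiliary Haar unitaries produced at every step, and the verification is a careful reduced‑word/moment computation; carrying it out uniformly, with no recourse to factoriality or separability of the $A_i$, is the crux of the argument.
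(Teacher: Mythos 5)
Your strategy is genuinely different from the paper's, but the candidate complement you assemble cannot be free from $\mathcal{A}$, for a reason independent of all the technical issues you flag at the end. Write $e_i=u_ip_iu_i^*$, and assume at least two of the $p_i$ are nonzero (otherwise you are in your base case), so that $0<\tau(e_1)<1$ and $e_1\in\mathcal{A}$. Your $Q$ is generated by, among other things, $\mathcal{R}_1$, which is a free-product factor of $e_1Me_1=\mathcal{A}_1*\mathcal{R}_1$ and hence a unital subalgebra of $e_1Me_1$ containing $e_1$, and by $\mathcal{B}_1=u_1(A_1p_1^{\perp})u_1^*$, which contains $1-e_1$. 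Thus $e_1\in Q$. But freely independent subalgebras intersect only in the scalars: if $x\in\mathcal{A}\cap Q$ has $\tau(x)=0$, freeness applied to the alternating centered word $x^*\cdot x$ gives $\|x\|_2^2=0$. So $Q$ and $\mathcal{A}$ share the nontrivial projection $e_1$ and cannot be free; the same applies to every $e_i$. This is structural, not a bookkeeping issue: \emph{no} complement of $\mathcal{A}$ can be built out of unital subalgebras of the compressions $e_iMe_i$, $(1-e_1)M(1-e_1)$ together with intertwiners between them, because any such assembly remembers the partition of unity $\{e_i\}$ of $\mathcal{A}$. The peeling induction along the corners of $\mathcal{A}$ is therefore doomed as designed.

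The paper's construction avoids exactly this trap: the complement is a reassembly of the leftover corners $A_i(1-p_i)$ over a partition of unity that is \emph{freely independent} of the one underlying $\mathcal{A}$, rather than equal to it. For $n=2$ one picks a self-adjoint unitary $u\in\{1-p_1,\,p_2\}''$ with $u(1-p_1)u=p_2$ (possible since these projections are free and of equal trace), and sets $\mathcal{A}_1=A_1p_1+uA_2p_2u$ (unitarily conjugate to $\mathcal{A}$, blocks supported on $p_1,1-p_1\in A_1$) and $\mathcal{A}_2=uA_1(1-p_1)u+A_2(1-p_2)$ (blocks supported on $p_2,1-p_2\in A_2$); freeness is then checked by an explicit word computation, expanding $u$ in the orthonormal basis of alternating words in the centered projections. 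Two secondary gaps in your write-up: (i) the corner lemma $q(A*B)q=(Aq)*\mathcal{R}$ with $\mathcal{R}$ of the stated explicit form is a Dykema-type result established for separable (hyperfinite) algebras via models that do not obviously survive atoms or non-separability, so asserting it ``needs no separability'' does not make it so — and the paper deliberately never needs it; (ii) you defer the ``simultaneous freeness'' verification entirely, and that verification is the whole technical content of the theorem, so even absent the structural obstruction the proposal is a plan rather than a proof.
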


To prove this result, we explicitly construct the  free complement of such a  MASA  by an appropriate  reassembling of the remaining corners $\{A_i(1-p_i)\}_{i=1}^n$. The proof then consists of a 
delicate verification of the free independence of the algebras involved and of the fact that they do indeed generate $M$. 

By combining the above result with results in \cite{BDIP23}, we deduce that if the $A_i$ are assumed purely non-separable $\forall i$, 
then any purely non-separable maximal amenable MASA $A$ in $M=A_1*\cdots *A_n$ is FC. In fact, the mere singularity of $A$ is sufficient to ensure FC, 
with the maximal amenability of $A$ being then automatic, by \cite{P83}. 

\begin{cor}\label{nscorollary}
    Let $A_1, A_2, \dots, A_n$ be purely non-separable abelian tracial von Neumann algebras, $2\leq n \leq \infty$, and $M = A_1 * \dots * A_n$. Let $A\subset M$ be a purely non-separable MASA. 
    Then the following conditions are equivalent. 
    
    \begin{enumerate}
        \item $A$ is singular in $M$.
        \item $A$ is maximal amenable in $M$
        \item $A$ is freely complemented in $M$.         
        \end{enumerate} 
        
\end{cor}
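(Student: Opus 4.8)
The plan is to prove Corollary~\ref{nscorollary} by establishing the chain of implications $(3) \Rightarrow (2) \Rightarrow (1) \Rightarrow (3)$, with the last implication being the one that genuinely uses the structural result of Theorem~\ref{thmA} together with the classification of purely non-separable maximal amenable MASAs from \cite{BDIP23}.

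First I would dispose of $(3) \Rightarrow (2)$: this is immediate from the cited result of \cite{P83}, since $A$ is diffuse (being purely non-separable, hence in particular diffuse) and amenable (being abelian), and a diffuse amenable subalgebra that is freely complemented in a II$_1$ factor is maximal amenable. Next, $(2) \Rightarrow (1)$ is also a general fact: if $A$ is a MASA in $M$ that is \emph{not} singular, then its normalizer generates a larger von Neumann algebra $N = \mathcal{N}_M(A)'' \supsetneq A$; when $A$ is abelian and the inclusion $A \subset N$ is regular, $N$ is amenable (a crossed product of an abelian algebra by the action of the normalizing groupoid, hence amenable when $A$ is abelian — more carefully, one invokes that $A \subset N$ with $A$ a regular MASA forces $N$ to be approximately finite dimensional, or one notes $N$ is hyperfinite by Connes' theorem since it is amenable as a groupoid crossed product), contradicting maximal amenability of $A$. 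So a maximal amenable MASA must be singular.

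The substantive implication is $(1) \Rightarrow (3)$. Here the plan is: assume $A \subset M = A_1 * \cdots * A_n$ is a purely non-separable singular MASA. I would invoke the relevant structural theorem from \cite{BDIP23} — which, in the purely non-separable setting, describes singular MASAs of such free products up to conjugacy — to conclude that $A$ is of the form $\sum_{i=1}^n u_i A_i p_i u_i^*$ for suitable projections $p_i \in A_i$ and unitaries $u_i \in \mathcal{U}(M)$ with $\sum_i u_i p_i u_i^* = 1$. (The point of the purely non-separable hypothesis on all the $A_i$ is precisely that it rigidifies the possible singular MASAs: there is ``no room'' for a singular MASA that is not built by reassembling pieces of the $A_i$, because any diffuse separable piece could be perturbed.) Once $A$ is known to have this form, Theorem~\ref{thmA} applies verbatim and yields that $A$ is freely complemented in $M$, closing the cycle.

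The main obstacle I anticipate is making the citation to \cite{BDIP23} precise: one must check that the hypotheses of whatever classification result is being used there are met — in particular that singularity of $A$ (rather than merely maximal amenability) is what is needed as input, and that the purely non-separable hypothesis on $A$ and on all the factors $A_i$ matches. A secondary subtlety is that Theorem~\ref{thmA} as stated does not require $A$ to be a MASA or purely non-separable — it only requires the reassembled form — so no extra verification is needed on that side; the entire weight of the argument rests on quoting the \cite{BDIP23} structure theorem correctly and observing that its output is exactly the hypothesis of Theorem~\ref{thmA}. I would therefore spend most of the write-up carefully stating the borrowed result and checking the non-separability bookkeeping, and keep the rest terse.
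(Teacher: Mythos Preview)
Your overall strategy matches the paper's proof exactly: the paper also treats $(3)\Rightarrow(2)\Rightarrow(1)$ as clear and, for $(1)\Rightarrow(3)$, invokes Corollary~3.7 of \cite{BDIP23} to write the purely non-separable singular MASA $A$ in the reassembled form $\sum_i u_i A_i p_i u_i^*$, after which Theorem~\ref{thmA} applies directly. Your assessment of where the weight lies, and of what needs to be checked when quoting \cite{BDIP23}, is on target.

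There is, however, a genuine error in your justification of $(2)\Rightarrow(1)$. You assert that if $A$ is a regular MASA in $N=\mathcal{N}_M(A)''$ then $N$ must be amenable, but this is false: for any free p.m.p.\ action of a non-amenable group $\Gamma$, the crossed product $L^\infty(X)\rtimes\Gamma$ has $L^\infty(X)$ as a Cartan subalgebra yet is non-amenable. The amenability of the normalizing groupoid is an extra hypothesis, not a consequence of $A$ being abelian, so both of your parenthetical justifications are circular or wrong. The fix is to use not the full normalizer but a \emph{single} normalizing unitary: if $A$ is not singular, choose $u\in\mathcal{N}_M(A)\setminus A$; then $W^*(A,u)\supsetneq A$ is generated as a von Neumann algebra by the group $\langle \mathcal{U}(A),u\rangle$, which is abelian-by-cyclic and hence amenable, so $W^*(A,u)$ is injective. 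This contradicts maximal amenability of $A$ and repairs the implication without affecting the rest of your plan.
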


While Popa's general FC problem remains wide open, and it is quite hard to predict what the answer to this problem may be, even in the very concrete cases $(a), (b), (c)$ mentioned above, 
it was conjectured in \cite{P23a}, \cite{P23b} that the following weaker property  
holds  true for all MASAs in free group factors: {\it Popa's weak FC conjecture} ``Given any amenable von Neumann subalgebra $B$ of a free group factor $L\mathbb{F}_n$ (like for instance a MASA), 
there exists a Haar unitary $u\in L\mathbb{F}_n$ that's free independent to $B$.'' 

We approach this conjecture in the second part of the paper, and prove that it is indeed verified by  
each one  of the  above  classes $(a), (b), (c)$ 
of examples of maximal amenable MASAs in $L\mathbb{F}_n$. This is in fact easy to see for the examples $(a)$, 
because any $g\in \mathbb{F}_n$ admits plenty of $h\in \mathbb{F}_n$ that are free to $g$, and and also for the examples $(b)$, where 
each one of the maximal amenable MASAs $A(t)$ constructed in \cite{BP23} has $A(t')$ free independent to $A(t)$ whenever $t'\perp t$ in the $\mathbb R$-valued $\ell^2I$. In the case of  
the radial MASA $A_r\subset L\mathbb{F}_n$, $2\leq n < \infty$, in example $(c)$, generated by the element $r=\sum_{i=1}^n u_i+u_i^*$, where $u_1, \dots, u_n$ are the free generators of 
$L\mathbb{F}_n$, we use the fact that $B_i:=\{u_i+u_i^*\}''\subset \{u_i\}''=:A_i$ corresponds to the inclusion of the algebra of even functions into $L^\infty[-1,1]$. Thus, the 
unitary $u_i=|x|/x\in L^\infty[-1,1]$ in $A_i$ has period 2 and $0$ expectation onto $B_i$, and we show the following: 

\begin{thrm}\label{thmB} Any of the  above unitaries $v_{ij}=u_iu_j \in L\mathbb{F}_n$, $1\leq i\neq j \leq n$,  is free independent to 
the radial MASA  $A_r \subset L\mathbb{F}_n$, $2\leq n <\infty$.    
\end{thrm}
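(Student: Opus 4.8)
The plan is to show that the pair $(A_r, W^*(v_{ij}))$ is free, equivalently that $A_r$ and the von Neumann algebra generated by $v_{ij} = u_i u_j$ are free in $L\mathbb F_n$. The natural strategy is to realize $A_r$ inside the free product $A_1 * \cdots * A_n$ with $A_k = \{u_k\}'' \cong L^\infty[-1,1]$, and to exploit the fact that each $u_k \in A_k$ is, after identifying $A_k$ with $L^\infty[-1,1]$ via $u_k + u_k^* \leftrightarrow 2x$, the unitary $|x|/x$, which has period $2$, satisfies $u_k^2 = 1$... wait, rather $u_k = \mathrm{sgn}(x)$ takes values $\pm 1$, so $u_k = u_k^*$ is a symmetry, and crucially $E_{B_k}(u_k) = 0$ where $B_k = \{u_k + u_k^*\}'' = \{2x\}''$ is the even part. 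So the picture is: $A_r = \{r\}'' \subset B_1 * \cdots * B_n =: B$, and within each $A_k = B_k \rtimes \mathbb Z/2$ (crossed product by the flip $x \mapsto -x$, implemented by the symmetry $u_k$), we want to peel off the symmetries $u_k$ and reorganize.

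**First** I would reduce the freeness of $A_r$ and $v_{ij}$ to a combinatorial statement about alternating words. A word in $A_r \vee W^*(v_{ij})$ that we must show has trace zero (when its letters alternate and have zero trace onto the respective algebras) can be expanded: elements of $A_r$ are limits of polynomials in $r = \sum_k (u_k + u_k^*) = \sum_k 2x_k$ where $x_k$ denotes the copy of $x$ in $B_k \subset A_k$; elements of $W^*(v_{ij})$ are limits of polynomials in $v_{ij}$ and $v_{ij}^*$. So the real content is: for suitable centered Laurent polynomials $P_1, \dots, P_m$ in $v_{ij}$ and suitable centered polynomials $Q_0, \dots, Q_m$ in $r$, one has $\tau(Q_0 P_1 Q_1 \cdots P_m Q_m) = 0$ (with the appropriate endpoint conventions). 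The key algebraic observation is that $v_{ij} = u_i u_j$ where $u_i, u_j$ are symmetries lying in the free factors $A_i, A_j$; conjugation by $v_{ij}$ on the $k$-th free factor is trivial for $k \neq i,j$, and on $A_i$ (resp. $A_j$) it acts by the flip $\mathrm{Ad}(u_i)$ (resp. $\mathrm{Ad}(u_j)$), which sends $x_i \mapsto -x_i$ (resp. $x_j \mapsto -x_j$). So $v_{ij}^\ell \, r \, v_{ij}^{-\ell}$ is obtained from $r$ by flipping the signs of the $i$- and $j$-components $2x_i, 2x_j$ according to the parity of $\ell$ — but only the $x_i, x_j$ pieces move, and in a bounded, explicitly controlled way.

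**The main computational step** is then to expand $Q_0 P_1 Q_1 \cdots P_m Q_m$ by writing each $P_s = \sum_\ell c_{s,\ell} v_{ij}^\ell$ and pushing all the $v_{ij}$-powers to the right using the flip relations, collecting a net power $v_{ij}^N$ on the far right (whose trace is $0$ unless $N = 0$, and we arrange $N \neq 0$, or handle $N=0$ separately) and a product of "twisted" polynomials in the $x_k$'s in the free product $A_1 * \cdots * A_n$. Here I would use the free-product moment formula: a product of elements $a_k \in A_k$ (or more precisely of centered elements in various $A_k$) with consecutive letters in \emph{distinct} factors has trace zero. The twisting by flips changes $2x_i \mapsto \pm 2x_i$ and $2x_j \mapsto \pm 2x_j$ but leaves the \emph{algebra} $A_i$ (resp. $A_j$) invariant and, more importantly, leaves the other components $2x_k$, $k\neq i,j$, completely untouched — and in any nonconstant polynomial in $r$ those other components $x_k$ for $k \notin \{i,j\}$ (and there is at least one, since $n \geq 2$... we need $n \geq 3$ here? no: if $n = 2$ and $\{i,j\} = \{1,2\}$ then \emph{both} components get flipped) contribute letters in factors not reachable by any power of $v_{ij}$, forcing the product to reduce to a centered alternating word in $A_1 * \cdots * A_n$ and hence have trace $0$. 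The case $n = 2$ needs separate care: there $v_{12} = u_1 u_2$ flips both $x_1$ and $x_2$ simultaneously, so $v_{12} r v_{12}^* = -2x_1 - 2x_2 + (\text{stuff})$ — actually $v_{12} r v_{12}^* = u_1u_2(2x_1 + 2x_2)u_2^*u_1^* = 2x_1 + 2x_2 = r$ since $u_2$ commutes with $x_1$-part... no, $u_1$ commutes with $x_2$ and anticommutes(flips) $x_1$; let me just say: the relation $v_{ij} r v_{ij}^* = -r$ when $n=2$ (after centering, $r$ itself being centered) reduces everything to showing $\{r\}''$ and $\{u_1 u_2\}''$ are free in $L\mathbb F_2$, which should follow from a direct moment computation using $E_{B_1*B_2}(u_1) = E_{B_1*B_2}(u_2) = 0$.

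**The hard part**, I expect, is controlling the twisted polynomials in the $x_k$ after the flips and checking that every surviving term is a genuine alternating centered word in the free product $A_1 * \cdots * A_n$ — in particular verifying that after collecting, no cancellations at the level of \emph{adjacent same-factor letters} occur that would produce a non-centered term surviving with nonzero trace. This is exactly the "delicate verification of free independence" flavor already flagged for Theorem~\ref{thmA}; concretely, one must organize the expansion so that the $B_k$-valued pieces ($k \neq i,j$) serve as separators, use $E_{B_k}(u_k) = 0$ (equivalently $\tau_{A_k}(\text{odd polynomial in } u_k \cdot \text{even polynomial}) $ vanishes) to kill the $i,j$-letters that remain after the flip-pushing, and induct on the number $m$ of $P$-letters. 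I would also need the elementary fact that $\tau(v_{ij}^N) = 0$ for $N\ne 0$, which is immediate since $v_{ij} = u_i u_j$ is a reduced word of nonzero length in $\mathbb F_n$, hence a Haar-unitary-like element with all nonzero moments vanishing. Assembling these, freeness of $A_r$ and $v_{ij}$ follows from Voiculescu's criterion (vanishing of all alternating centered moments), completing the proof.
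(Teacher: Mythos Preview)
Your proposal contains a genuine gap: the ``flip relations'' you want to use do not exist in a free product. You write that ``conjugation by $v_{ij}$ on the $k$-th free factor is trivial for $k \neq i,j$, and on $A_i$ (resp.\ $A_j$) it acts by the flip,'' but this is false. In $A_1 * \cdots * A_n$ the factor $A_k$ does \emph{not} commute with $u_j$ for $j \neq k$; for a trace-zero $a \in A_k$ the element $u_j a u_j^*$ is a length-three alternating word, not an element of $A_k$. So your plan of pushing all powers of $v_{ij}$ to the right via $v_{ij}\, r\, v_{ij}^{-1} = (\text{flipped }r)$ collapses at the outset. You even notice this yourself in the $n=2$ discussion (``$u_2$ commutes with $x_1$-part\dots\ no'') but then set it aside rather than abandon the scheme.

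The paper's argument avoids any commutation altogether and is much shorter. One observes that $A_r \subset B_1 * \cdots * B_n$, so a centered element of $A_r$ is (a limit of sums of) alternating words in trace-zero letters from the $B_k$'s. A centered element of $\{v_{ij}\}''$ is a linear combination of $(u_iu_j)^m$ and $(u_j^*u_i^*)^m$, themselves alternating words in the letters $u_i, u_j, u_i^*, u_j^*$. Concatenating, the only possible obstruction to getting an alternating word in $A_1 * \cdots * A_n$ is at the seams, where a $u_i^{\pm 1}$ may meet a $b_i \in B_i$; but $E_{B_i}(u_i) = 0$ gives $\tau(u_i^* b_i) = \tau(b_i u_i) = 0$, and $u_i^* b_i u_i = b_i$ since $A_i$ is abelian, so the merged letter is still trace-zero in $A_i$. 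Hence every expanded word is alternating and centered in $A_1 * \cdots * A_n$, and freeness follows. You already isolated the two key ingredients ($A_r \subset B_1 * \cdots * B_n$ and $E_{B_k}(u_k)=0$); the point is that they should be applied directly to the alternating-word criterion, not routed through a nonexistent conjugation identity.
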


We in fact treat this example within a more general framework, obtaining many more examples where the weak FC property is verified. For instance we show that if $A_i$ are all 
purely non-separable then any abelian von Neumann subalgebra in $A_1* \cdots *A_n$ admits Haar unitaries that are free independent to it (see Section 3).

\textbf{Acknowledgements:}
We are much indebted  to Adrian Ioana for suggesting the method of proof of Lemma \ref{Existence of orthogonal unitary lemma}. 
We are very grateful to Sorin Popa for strongly encouraging  us to work on these problems 
and to Dima Shlyakhtenko for many illuminating conversations regarding this work.

%%%%%%%%%%%%%%%%%%%%%%%%%%%%%%%%%%%%%%%%%%%%%%%%%%%%%%%%%%%%%%%%%%%%%%%%%%%%%%%%%%%%%%%%%%%%%%%%%%%%%%%%%%%%%%%%%%%%%%%%%%%%%%%%%%%%%%%%%%%%%%%%%%%%%%%%
\section{Free Reassembly Algebras}
Throughout this section, for $2 \leq n \leq \infty$, let $A_1, A_2, \dots, A_n$ be diffuse abelian tracial von Neumann algebras, and let $M = A_1 * \cdots *A_n$ denote their tracial free product (c.f. \cite{C73, V85}). Following \cite{P21}, we consider the following class of maximal amenable subalgebras $\mathcal{A} \subset M$. 
For $1\leq i \leq n$, choose projections $p_i \in A_i$ and unitaries $u_i \in \mathcal{U}(M)$ such that $\sum_{i=1}^n u_i p_i u_i^*=1$. Then let $\mathcal{A} = \sum_{i=1}^n u_i A_ip_i u_i^*$

Such an abelian subalgebra $\mathcal{A} \subset M$ can be thought of as a "free reassembly" of the generating algebras $A_1, \dots, A_n$ of $M$. By a result of Popa in \cite{P83}, one immediately sees that the MASA $\A$ is maximal amenable in $M$.  In the case the algebras $A_1, \dots, A_n$ are separable and diffuse, so that $M = L\mathbb{F}_n$, this class of maximal amenable subalgebra was put forward in \cite{P21} as a "test candidate" for the FC Problem (see also \cite{P23a}, \cite{P23b}, \cite{BDIP23}).

We first observe that up to unitary conjugacy, $\mathcal{A}$ is independent of the choice of unitaries $u_i$, for $1\leq i \leq n$.

\begin{lemma} \label{Lemma: Independence of Choice of Unitaries}

    For $1\leq i \leq n$, let $p_i \in A_i$ be projections and $u_i, v_i \in \mathcal{U}(M)$ be unitaries with $\sum_{i=1}^n u_ip_iu_i^* = \sum_{i=1}^n v_ip_iv_i^* = 1$. Then the algebras $\mathcal{A}_1 = \sum_{i=1}^n u_iA_ip_iu_i^*$ and $\mathcal{A}_2 = \sum_{i=1}^n v_iA_ip_iv_i^*$ are unitarily conjugate in $M$.

\end{lemma}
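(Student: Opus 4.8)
The goal is to find a single unitary $w \in \mathcal{U}(M)$ with $w \mathcal{A}_1 w^* = \mathcal{A}_2$. The natural candidate is a unitary that "transports the $i$-th corner of $\mathcal{A}_1$ to the $i$-th corner of $\mathcal{A}_2$" for each $i$ simultaneously. Concretely, I would look for $w = \sum_{i=1}^n v_i e_i u_i^*$ for suitable partial isometries $e_i$, but the cleanest route is to reduce to the case where one of the two families of unitaries is trivial.

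First I would reduce to the case $v_i = 1$ for all $i$. Indeed, it suffices to show each $\mathcal{A}_1 = \sum_i u_i A_i p_i u_i^*$ is unitarily conjugate to the "standard" reassembly built from the same projections — but wait, the standard reassembly $\sum_i A_i p_i$ is only a well-defined subalgebra of $M$ when $\sum_i p_i = 1$, which need not hold. So instead I keep both families but exploit that the map $x \mapsto \sum_i u_i (\text{$i$-th component of }x) u_i^*$ (reading off the component in $u_i A_i p_i u_i^*$ via the projection $u_i p_i u_i^*$) is a trace-preserving $*$-isomorphism from $\bigoplus_i A_i p_i$ onto $\mathcal{A}_1$, and likewise with $v_i$ onto $\mathcal{A}_2$. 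Composing, I get a trace-preserving $*$-isomorphism $\theta : \mathcal{A}_1 \to \mathcal{A}_2$ which sends $u_i A_i p_i u_i^* \mapsto v_i A_i p_i v_i^*$ and, more specifically, $u_i a p_i u_i^* \mapsto v_i a p_i v_i^*$ for $a \in A_i$.

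The key step is then to promote the abstract isomorphism $\theta$ to a \emph{spatial} one implemented by a unitary of $M$. Since $\mathcal{A}_1$ and $\mathcal{A}_2$ are abelian and $\theta$ is trace-preserving, a standard argument (e.g. using that both are MASAs, or directly building the partial isometries) applies: on each summand, $u_i p_i u_i^*$ and $v_i p_i v_i^*$ are projections with the same trace $\tau(p_i)$, so there is a partial isometry $w_i \in M$ with $w_i^* w_i = u_i p_i u_i^*$, $w_i w_i^* = v_i p_i v_i^*$; the natural choice is $w_i = v_i p_i u_i^*$, which manifestly satisfies $w_i (u_i a p_i u_i^*) w_i^* = v_i a p_i v_i^*$ for all $a \in A_i$. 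Then $w := \sum_{i=1}^n v_i p_i u_i^*$ is a well-defined element of $M$ (the sum converges in $\|\cdot\|_2$ when $n = \infty$), and I would check $w^* w = \sum_i u_i p_i u_i^* = 1$ and $w w^* = \sum_i v_i p_i v_i^* = 1$, so $w$ is a unitary. Here I must be careful about cross terms: $w^* w = \sum_{i,j} u_i p_i v_i^* v_j p_j u_j^*$, and the cross terms need not vanish a priori, so this naive $w$ is \emph{not} obviously unitary. The main obstacle is exactly this: arranging the $w_i$'s so that they assemble into a unitary rather than merely a family of partial isometries with orthogonal ranges and orthogonal sources.

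To overcome it, I would instead argue as follows. The projections $\{u_i p_i u_i^*\}_i$ need not be mutually orthogonal, so one cannot just add the $w_i$. Rather, I would first pass to a common refinement or use a maximality argument: since $\sum_i u_i p_i u_i^* = 1$ and each $u_i p_i u_i^* \sim v_i p_i v_i^*$ via the \emph{specific} isomorphism induced by $\theta$ on the (abelian) algebra, I invoke that $\mathcal{A}_1$, $\mathcal{A}_2$ are MASAs in the II$_1$ factor $M$ and that any trace-preserving isomorphism between two MASAs in a II$_1$ factor that are "in standard position" can be spatially implemented — but this is false in general (singular MASAs!), so the honest approach is: build $w$ by an inductive/exhaustion procedure over an orthogonal decomposition. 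Decompose $1$ as an orthogonal sum of projections $q_\alpha$ each of which lies under some $u_{i(\alpha)} p_{i(\alpha)} u_{i(\alpha)}^*$ and whose image $\theta(q_\alpha)$ lies under $v_{i(\alpha)} p_{i(\alpha)} v_{i(\alpha)}^*$ with $\{\theta(q_\alpha)\}_\alpha$ also orthogonal summing to $1$ (possible by a Zorn's lemma / maximality argument inside $M$, using that $M$ is a II$_1$ factor so any two projections of equal trace are equivalent, and refining simultaneously on both sides). On each piece $\theta$ is spatially implemented by a partial isometry $w_\alpha$ with $w_\alpha^* w_\alpha = q_\alpha$, $w_\alpha w_\alpha^* = \theta(q_\alpha)$, $w_\alpha x w_\alpha^* = \theta(x) \theta(q_\alpha)$ for $x \le q_\alpha$; then $w = \sum_\alpha w_\alpha$ is genuinely a unitary and $w \mathcal{A}_1 w^* = \mathcal{A}_2$ by density. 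I expect the bookkeeping in this simultaneous refinement — ensuring countably many pieces suffice and the series converges in $\|\cdot\|_2$ in the $n = \infty$ case — to be the only real subtlety.
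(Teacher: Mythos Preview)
You identified the correct candidate $w = \sum_{i=1}^n v_i p_i u_i^*$ and then talked yourself out of it. The gap is your claim that ``the projections $\{u_i p_i u_i^*\}_i$ need not be mutually orthogonal.'' They must be: each $u_i p_i u_i^*$ is a projection, and a family of projections in a tracial von Neumann algebra summing to $1$ is automatically pairwise orthogonal (from $\sum_{i\neq j} e_i e_j = (\sum e_i)^2 - \sum e_i = 0$ and faithfulness of $\tau$, since $\tau(e_i e_j) = \tau(e_j e_i e_j) \ge 0$). The same holds for the $v_i p_i v_i^*$. Once you use this, the cross terms in both $w^*w$ and $ww^*$ vanish: from $(u_i p_i u_i^*)(u_j p_j u_j^*) = 0$ one gets $p_i u_i^* u_j p_j = 0$ by multiplying by $u_i^*$ on the left and $u_j$ on the right, and likewise $p_i v_i^* v_j p_j = 0$. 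Hence $w^*w = \sum_i u_i p_i u_i^* = 1$ and $ww^* = \sum_i v_i p_i v_i^* = 1$, and the verification $w\mathcal{A}_1 w^* = \mathcal{A}_2$ is immediate. This is exactly the paper's (two-line) argument.

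Your fallback route via an exhaustion/Zorn argument is not only unnecessary but, as sketched, does not work: knowing that $q_\alpha$ and $\theta(q_\alpha)$ have the same trace in a II$_1$ factor gives \emph{some} partial isometry between them, not one that intertwines $\theta$ on $\mathcal{A}_1 q_\alpha$. You yourself flagged that trace-preserving isomorphisms between MASAs are not spatially implementable in general; refining into smaller orthogonal pieces does not circumvent this obstruction, so the argument as written has a genuine hole. The entire difficulty evaporates once you notice the orthogonality of the $u_i p_i u_i^*$.
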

\begin{proof}
Let $w = \sum_{i=1}^n v_ip_iu_i^*$. Since the projections $u_ip_iu_i^*$, for $1\leq i\leq n$, are pairwise orthogonal, it is clear that $ww^* = \sum_{i=1}^n v_ip_iv_i^* = 1$. Hence, $w$ is a unitary. It is then straightforward to see that $w\mathcal{A}_1w^* = \mathcal{A}_2$. 
\end{proof}

Given a choice of MASA $\mathcal{A} = \sum_{i=1}^n u_i A_ip_i u_i^*$ as described above, we would like to use Lemma \ref{Lemma: Independence of Choice of Unitaries} to modify the unitaries $u_i$ to a specific form. To do this, we require a small technical lemma regarding the structure of $W^{*}(p,q)$ when $p$ and $q$ are freely independent projections that have the same trace.

\begin{lemma}\label{Existence of unitary}
    Let $M$ be a tracial von Neumann algebra and let $p_1, p_2 \in M$ be freely independent projections with $\tau(p_1) = \tau(p_2)$. Then there is a unitary $u \in \{p_1, p_2\}''$ such that $up_1u^* = p_2$. Moreover, one can choose this unitary $u$ to be self adjoint.
\end{lemma}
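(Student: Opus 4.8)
The plan is to use the structure theory of $W^*(p_1, p_2)$ for two freely independent projections of equal trace, which is completely understood (it was computed by Voiculescu; see also work of Radulescu). Write $\tau(p_1) = \tau(p_2) = t$. When $t = 0$ or $t = 1$ the claim is trivial (both projections are $0$ or both are $1$, so $u = 1$ works), so assume $0 < t < 1$. Since $p_1$ and $p_2$ are free, a standard computation (using moments, or the free convolution of the two Bernoulli-type distributions) shows that the element $p_1 + p_2$ has no atoms at values other than possibly $0$ and $2$, and in fact for $t = 1/2$ one gets $W^*(p_1,p_2) \cong L^\infty[0,2] \rtimes \mathbb{Z}/2$-type structure; more uniformly, $W^*(p_1, p_2)$ is abelian-by-($\mathbb{Z}/2$) and contains a canonical diffuse abelian subalgebra together with a symmetry exchanging $p_1$ and $p_2$.

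Concretely, here is the cleanest route. Set $e = p_1 \wedge p_2$, $f = p_1 \wedge (1-p_2)$, $g = (1-p_1)\wedge p_2$, $h = (1-p_1)\wedge(1-p_2)$; by freeness and $0<t<1$ one checks that all four of these "corner" projections have the expected behavior (in fact, for generic $t$ all are $0$ except when forced otherwise — freeness makes $p_1, p_2$ "in general position"). The key point is the classical fact that two projections in general position generate an algebra isomorphic to $\int^\oplus M_2(\mathbb{C})\, d\mu(\theta)$ over a "position" parameter $\theta$, on which $p_1$ and $p_2$ act as the rank-one projections onto lines at angle $\theta$. On each fiber $M_2(\mathbb{C})$ there is a self-adjoint unitary (a reflection) conjugating the line at angle $0$ to the line at angle $\theta$; assembling these fiberwise reflections into a measurable field gives a self-adjoint unitary $u \in W^*(p_1, p_2)$ with $u p_1 u^* = p_2$. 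The trace condition $\tau(p_1) = \tau(p_2)$ is exactly what guarantees the two boundary atoms (where $p_1 = p_2$ and where $p_1 \perp p_2$) have matching weights so that the reflection can be defined there too — on the $p_1 = p_2$ part take $u = 1$, and on the $p_1 \perp p_2$ part take $u = p_1 + p_2 + (\text{remainder as identity})$ appropriately, which is self-adjoint.

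The main obstacle I expect is handling the atomic parts of the joint distribution cleanly — i.e., the projections $p_1 \wedge p_2$ and $(1-p_1)\wedge(1-p_2)$ and the "wrong-rank" corners $p_1\wedge(1-p_2)$, $(1-p_1)\wedge p_2$. Away from these, the $2\times 2$ fiber picture makes the self-adjoint conjugating unitary transparent; on them one must check that freeness plus equal trace forces $\tau(p_1\wedge(1-p_2)) = \tau((1-p_1)\wedge p_2)$ so that a partial isometry swapping $f$ and $g$ extends to a self-adjoint unitary there, and on the fully atomic part $p_1 = p_2$ one uses the identity. Alternatively — and this may be the shortest write-up — one can avoid the direct integral entirely: first produce \emph{some} unitary $w$ with $wp_1w^* = p_2$ (existence is immediate since $p_1, p_2$ have equal trace in the $\mathrm{II}_1$ factor, or one invokes that free projections of equal trace are Murray–von Neumann equivalent inside $W^*(p_1,p_2)$ via a partial isometry $v$ with $v^*v = p_1$, $vv^* = p_2$, then set $w = v + v^* + (1 - p_1 - p_2)$ after checking $1 - p_1 - p_2$ makes sense), observe that $w$ can be taken in $W^*(p_1, p_2)$, and then replace $w$ by the self-adjoint unitary $u$ obtained from the polar-type decomposition: writing $w$ in the $2\times 2$ fiber form, $w$ is fiberwise a rotation, and $u := $ (fiberwise reflection with the same axis-swapping property) is self-adjoint with the same conjugation action. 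I would present the direct-integral version as the main argument since it makes the self-adjointness essentially automatic.
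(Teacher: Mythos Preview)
Your approach is genuinely different from the paper's. The paper argues as follows: set $x = p_1 + p_2 - 1$ and $y = p_1 - p_2$, observe that $xy = -yx$, and let $u$ be the phase in the polar decomposition $x = u|x|$. Since $u$ is an odd Borel function of the self-adjoint element $x$, it is self-adjoint, commutes with $x$, and anticommutes with $y$, whence $up_1 = p_2 u$. The only remaining issue is whether $u$ is a genuine unitary, i.e.\ whether $x$ has trivial kernel; the paper settles this by computing the free additive convolution of the two Bernoulli laws via the $R$-transform and checking that the only possible atoms of $p_1 + p_2$ lie at $0$ or $2$, never at $1$. Freeness is used exactly once, at this spectral-atom step.

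Your direct-integral / two-projections picture is a legitimate alternative and, once set up, makes the self-adjoint fiberwise reflection transparent. What it buys is a conceptual explanation of why a \emph{symmetry} (not merely a unitary) exists; what it costs is the bookkeeping of the atomic corners, a measurable-selection argument for the field of reflections, and an appeal to the known joint law of two free projections to determine which corners survive. The paper's anticommutation trick bypasses all of that in a few lines.

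Two concrete issues in your write-up. First, in your ``shortest route'' you propose $w = v + v^{*} + (1 - p_1 - p_2)$ with $v^{*}v = p_1$, $vv^{*} = p_2$; this is a unitary only when $p_1 \perp p_2$, which free projections with $0 < t < 1$ never are, so the formula fails as written. Second, the assertion that a conjugating unitary ``can be taken in $W^{*}(p_1,p_2)$'' is precisely the content of the lemma: equal trace in the ambient factor $M$ gives a unitary in $M$, not a priori in $\{p_1,p_2\}''$, so this step cannot be dispatched by a sentence. You really do need either the full fiberwise construction or the paper's polar-decomposition argument to land inside $\{p_1,p_2\}''$.
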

\begin{proof}
    Without loss of generality, we may assume that $\tau(p_1) = \tau(p_2) \leq 1/2$, since otherwise we can apply the lemma to $1-p_1$ and $1-p_2$. Consider then the self adjoint elements $x = p_1 + p_2 - 1$ and $y = p_1 - p_2$. One can see that 
    $$xy = -p_1p_2 + p_2p_1 = -yx$$
    so that $x$ and $y$ anticommute. If we take $x=u|x|$ to be the polar decomposition of $x$, then $u$ is an odd Borel measurable function of $x$, so $ux = xu$ and $uy = - yu$. Adding these two equations together and simplifying then gives $up_1 = p_2u$. 

    It suffices then to check that $u$ is unitary, i.e. that it has no kernel, or that $0$ is not in its point spectrum. In turn, this means it is enough to know that 0 is not in the point spectrum of $x$. For this, we can use Voiculescu's $R$-transform (c.f. \cite{V86}) to consider the spectral measure of $p_{1}+p_{2}$:

    The Cauchy transform of each projection is 
    \[G_{p}(z)=\frac{\tau(p)}{1-z}+\frac{(1-\tau(p))}{z}\]
    which produces the $R$-transform 
    \[R(w)=\frac{w-1-2\tau(p)-\sqrt{(1-2\tau(p))^{2}-w(2-w)}}{2w}\]
    Doubling this, adding $\tfrac{1}{w}$, and inverting yields the Cauchy transform of $p_{1}+p_{2}$:
    \[G_{p_{1}+p_{2}}(z)=\frac{-2\tau(p)(z-1)-1+\sqrt{(2\tau(p)-1)^{2}(z-1)^{2}-4\tau(p)(z-1)+4\tau(p)}}{(z-1)^{2}-1}\]
    %(1-2\tau(p)(z-1))^{2}-((z-1)^{2}-1)(4\tau(p)-1)}}{(z-1)^{2}-1}\]
    %\frac{1-2\tau(p)}{z}+\frac{1}{1-(z-1)^{2}}\left((z-1)(1-2\tau(p))-\sqrt{(z-1)^{2}-r^{2}}\right)\]
    From this it can be easily seen that the only atoms in the spectral measure of $p_{1}+p_{2}$ can be at $z=2$ or $z=0$, since $(z-a)G_{p_{1}+p_{2}}(z)$ goes to zero as $z$ goes to $a$ for every other $a$ (in particular, for $1$).

    Finally, note that this $u$ is indeed self adjoint, since it is an $\mathbb{R}$ valued Borel function of a self adjoint operator.
\end{proof}

Before we begin the proof of the main result, we require one more small technical lemma.
\begin{lemma}\label{lemma: technical 2-norm convergence lemma}
Let $M$ be a tracial von Neuamann algebra, and let $m_1, m_2, \dots, m_k \in M$. Assume that for some bounded operators $\xi_{i, n} \in M$, for $1 \leq i \leq k$ and $n \in \naturals$, that we can express $m_i$ as a $\| \ \|_2$-norm convergent sum $m_i = \sum_{n = 1}^\infty \xi_{i, n}$. Then if for all $k$-tuples $(n_1, \cdots, n_k)$ of positive integers we have $\tau(\prod_{i=1}^k \xi_{i, n_i}) = 0$ then $\tau(\prod_{i=1}^k m_i) = 0$.
\end{lemma}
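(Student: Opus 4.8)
The plan is to reduce the $k$-fold product to a one-variable statement by expanding one factor $m_i$ at a time, using the single observation that for any fixed $c\in M$ the linear functional $x\mapsto\tau(cx)$ is $\|\cdot\|_2$-continuous, since $|\tau(cx)|=|\langle x,c^*\rangle|\le\|c\|_2\,\|x\|_2$. So I would first record the one-variable version: if $m=\sum_{n=1}^\infty\xi_n$ converges in $\|\cdot\|_2$ and $c\in M$, then $\tau(cm)=\sum_{n=1}^\infty\tau(c\xi_n)$ with the right-hand series convergent — this is immediate upon applying the functional above to the partial sums $\sum_{n\le N}\xi_n$ and letting $N\to\infty$.

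For the main argument I would iterate this together with the cyclicity of $\tau$. Starting from $\tau(m_1\cdots m_k)$ and taking $c=m_1\cdots m_{k-1}\in M$, the one-variable fact gives $\tau(m_1\cdots m_k)=\sum_{n_k}\tau(m_1\cdots m_{k-1}\xi_{k,n_k})$. For each fixed $n_k$, cyclicity of $\tau$ lets me move $m_{k-1}$ to the end: $\tau(m_1\cdots m_{k-1}\xi_{k,n_k})=\tau\big((\xi_{k,n_k}m_1\cdots m_{k-2})\,m_{k-1}\big)$; expanding $m_{k-1}$ — now with the fixed bounded element $\xi_{k,n_k}m_1\cdots m_{k-2}\in M$ in the role of $c$ — and rotating back produces $\sum_{n_{k-1}}\tau(m_1\cdots m_{k-2}\,\xi_{k-1,n_{k-1}}\xi_{k,n_k})$. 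Continuing by downward induction on the index of the last not-yet-expanded factor $m_j$ — where at each stage the operator multiplying $m_j$ is, up to a cyclic rotation, the single fixed element $\xi_{j+1,n_{j+1}}\cdots\xi_{k,n_k}\,m_1\cdots m_{j-1}\in M$ — I arrive at the iterated sum $\tau(m_1\cdots m_k)=\sum_{n_k}\sum_{n_{k-1}}\cdots\sum_{n_1}\tau(\xi_{1,n_1}\cdots\xi_{k,n_k})$. Since every summand vanishes by hypothesis, the whole expression is $0$, which is the assertion.

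The point to be careful about — and the reason the lemma is not entirely trivial — is that one cannot simply substitute the series for all of $m_1,\dots,m_k$ simultaneously and multiply out: $\|\cdot\|_2$-convergence is not preserved under multiplication, and the partial sums $\sum_{n\le N}\xi_{i,n}$ need not be uniformly bounded in operator norm, so products of partial sums need not converge in any convenient topology. Expanding one factor at a time circumvents this completely, because at every stage only a single series is active and everything multiplying it is one fixed element of $M$, so only the $\|\cdot\|_2$-continuity of $x\mapsto\tau(cx)$ is ever invoked. A minor secondary point is that the conclusion is obtained as an iterated sum rather than an absolutely convergent multi-indexed one, but this is harmless here since each term is identically zero.
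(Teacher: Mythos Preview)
Your proof is correct and follows essentially the same strategy as the paper's: expand one factor $m_i$ at a time, using that multiplication by a fixed element of $M$ is $\|\cdot\|_2$-continuous, so that at each stage only a single series is in play. The paper organizes this as an induction on $l$, proving that $\tau\big(\prod_{i=1}^l m_i\prod_{i=l+1}^k\xi_{i,n_i}\big)=0$ for every $l$ and every choice of $n_{l+1},\dots,n_k$; your use of cyclicity to reduce to a one-sided functional $x\mapsto\tau(cx)$ is a harmless variation, since the paper simply uses directly that left and right multiplication by bounded operators preserves $\|\cdot\|_2$-convergence.
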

\begin{proof}
    We claim that for any $1\leq l \leq k$ and any choice of positive integers $n_{l+1}, \dots, n_k$ that $\tau(\prod_{i=1}^l m_i \prod_{i = l+1}^k\xi_{i, n_{i}} ) = 0$.
    The claim is easy to see for $l = 1$. Indeed, fix a choice of positive integers $n_2, \dots, n_k$. By assumption we can express $m_1$ as a $\| \ \|_2$-norm convergent series $m_1 = \sum_{n = 1}^\infty \xi_{1, n}$. But since the operator $\prod_{i = 2}^k \xi_{i, n_i} \in M$ is bounded we get the series 
    $$m_1 \prod_{i = 2}^k \xi_{i, n_i} = \sum_{n = 1}^\infty \xi_{1, n}\xi_{2, n_2} \cdots \xi_{k, n_k}$$
    is also $\| \ \|_2$-norm convergent. By assumption, each summand $\xi_{1, n}\xi_{2, n_2} \cdots \xi_{k, n_k}$, for $n \geq 1$, has trace 0, hence $\tau(m_1 \prod_{i = 2}^k \xi_{i, n_i}) = 0$ as well. Proceeding with induction on $l$, the claim quickly follows. The Lemma then follows by taking $l = k$.
\end{proof}
\begin{remark}\label{remark: formal series expansion}
    Note we can interpret the above Lemma in the following manner. Let $M$ be a tracial von Neumann algebra and $m_1, m_2, \cdots, m_k \in M$. Suppose we have $\| \ \|_2$-norm convergent series representations $m_i = \sum_{n = 1}^\infty \xi_{i, n}$ for each $1 \leq i \leq k$ for bounded operators $\xi_{i, n}\in M$. Then imagine we expand the product $m_1m_2\cdots m_k$ as a formal series 
    $$\sum_{1 \leq n_1, \dots, n_k} \xi_{1, n_1}\cdots \xi_{k, n_k}$$
    In general, this series might not converge in any meaningful way. However, as long as we can show that each summand $\xi_{1, n_1}\cdots \xi_{k, n_k}$ has trace 0, we can still conclude $\tau(m_1\cdots m_k) = 0$ as well.
\end{remark}

\begin{thrm}\label{thm: free reassemblies are freely complemented}
    For $2 \leq n \leq \infty$, let $A_1, A_2, \dots, A_n$ be abelian tracial von Neumann algebras, and let $M = A_1 * \cdots *A_n$. Let $p_i \in A_i$, for $1\leq i\leq n$, be projections and $u_i \in \mathcal{U}(M)$, for $1 \leq i \leq n$, be unitaries with $\sum_{i=1}^n u_i p_i u_i^*=1$. Then  $\mathcal{A} =  \sum_{i=1}^n u_i A_ip_i u_i^*$ is freely complemented in $M$.
\end{thrm}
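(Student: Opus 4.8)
The plan is to first normalize the unitaries $u_i$ using Lemma~\ref{Lemma: Independence of Choice of Unitaries}, then to write down an explicit candidate complement obtained by \emph{reassembling the leftover corners} $D_i:=A_i(1-p_i)$, and finally to verify that this candidate is freely independent from $\A$ and generates $M$ together with it. For the normalization: since $\sum_i u_ip_iu_i^*=1$ forces the $u_ip_iu_i^*$ to be pairwise orthogonal, $\sum_i\tau(p_i)=1$; the cases $p_i=0$ (absorb $A_i$ into the complement) or $p_i=1$ (forces all other $p_j=0$) being trivial, assume $0<\tau(p_i)<1$ for all $i$. By Lemma~\ref{Lemma: Independence of Choice of Unitaries} the inclusion $\A\subset M$, hence the FC property, depends only on the data $(A_i,p_i)$, so I am free to choose the $u_i$. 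Using that $A_1$ is diffuse, fix a partition $1=f_1+\cdots+f_n$ of $1$ inside $A_1$ with $\tau(f_i)=\tau(p_i)$ and $f_1=p_1$ (possible since $\tau(p_i)\le 1-\tau(p_1)$ for $i\ge 2$ and $A_1(1-p_1)$ is diffuse). For $i\ge 2$ the projections $p_i\in A_i$ and $f_i\in A_1$ are freely independent with equal trace, so Lemma~\ref{Existence of unitary} provides a self-adjoint unitary $u_i\in W^*(p_i,f_i)\subset M$ with $u_ip_iu_i^*=f_i$; put $u_1=1$. After conjugating by the unitary of Lemma~\ref{Lemma: Independence of Choice of Unitaries}, I may thus assume $\A=A_1p_1\oplus\bigoplus_{i=2}^n u_iA_ip_iu_i^*$, each $u_i$ of the explicit odd-function/anticommuting type produced by Lemma~\ref{Existence of unitary}.

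Next I would build the complement $Q$ out of the corners $D_i=A_i(1-p_i)$. Since $\sum_i\tau(D_i)=n-1$, these cannot be placed orthogonally, so one cuts the $D_i$ into subcorners and, using partial isometries of $M$ drawn from the "generic-angle'' ($M_2$) part of the algebras $W^*(r,s)$ of pairs of freely independent projections as in Lemma~\ref{Existence of unitary}, reassembles them into $n-1$ diffuse abelian subalgebras $B_1,\dots,B_{n-1}\subset M$, each of trace $1$. Once the $B_k$ are shown to be mutually free and jointly free from $\A$, set $Q:=B_1*\cdots*B_{n-1}$. The clean model is $n=2$, where $Q$ is a single block: $Q=A_2(1-p_2)\oplus w\,A_1(1-p_1)\,w^*$ for a self-adjoint unitary $w\in W^*(1-p_1,p_2)$ with $w(1-p_1)w^*=p_2$ (one must \emph{move} the copy of $A_1(1-p_1)$ inside $Q$, since $A_1(1-p_1)$ itself is certainly not free from $A_1p_1\subset\A$, having orthogonal support). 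The reassembling isometries must be chosen with care so that both the generation and the freeness below go through.

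For generation: it suffices to recover every $A_i$ inside $W^*(\A,Q)$. In the model case $n=2$: $1-p_1=1-q_1\in\A$, while $A_2(1-p_2)\subset Q$ gives $1-p_2$, hence $p_2\in Q$; therefore $u_2\in W^*(p_2,1-p_1)$ and $w\in W^*(1-p_1,p_2)$ both lie in $W^*(\A,Q)$, so $A_2p_2=u_2^*(u_2A_2p_2u_2^*)u_2$ and $A_1(1-p_1)=w^*(wA_1(1-p_1)w^*)w$ are in $W^*(\A,Q)$, whence $A_1,A_2\subset W^*(\A,Q)$ and $W^*(\A,Q)=W^*(A_1,A_2)=M$. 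For general $n$ the same bootstrap is run: because $u_1=1$ the corner $A_1p_1$ already lies in $\A$, and arranging the reassembly so that $A_1(1-p_1)$, then the $f_i$, then the $p_i$, then the $u_i$, and finally the remaining corners $A_i(1-p_i)$ all become available in $W^*(\A,Q)$ yields $W^*(\A,Q)\supseteq W^*(A_1,\dots,A_n)=M$. Making this recovery actually work — each reassembling isometry must be expressible from projections one already controls, some in $\A$ and some in $Q$ — is a second constraint that, together with freeness, pins down the precise construction.

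The heart of the proof, and the step I expect to be the main obstacle, is the free independence of $\A$ and $Q$. I would work in $L^2(M)$ with its reduced-word decomposition coming from $M=A_1*\cdots*A_n$. Every element of $\A$ (a sum of terms $u_ib u_i^*$ with $b\in A_ip_i$) and every element of $Q$ (built from the $D_i$ and the reassembling isometries) can be written as a $\|\cdot\|_2$-convergent series of scaled reduced words, once one expands the fixed unitaries and partial isometries $u_i,w,\dots$ themselves in reduced words over the free product. By Lemma~\ref{lemma: technical 2-norm convergence lemma} and Remark~\ref{remark: formal series expansion}, to prove $\tau(a_1b_1a_2b_2\cdots)=0$ for centered $a_j\in\A$, $b_j\in Q$ it then suffices to show that each \emph{single} formal alternating product of such reduced-word summands has trace $0$. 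Carrying this out is the real work: a detailed combinatorial analysis of reduced words in $A_1*\cdots*A_n$, exploiting the anticommutation relations satisfied by the self-adjoint unitaries of Lemma~\ref{Existence of unitary} together with the genericity of the reassembling isometries to show that no uncancelled word can survive in such a product. It is precisely for this bookkeeping that Lemma~\ref{lemma: technical 2-norm convergence lemma} was set up in advance, so that one only ever reasons about finitely supported elementary tensors rather than about limits.
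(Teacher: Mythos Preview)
Your proposal is correct and follows essentially the same strategy as the paper: normalize via Lemma~\ref{Lemma: Independence of Choice of Unitaries}, pick a partition $\sum f_i=1$ in $A_1$, use the self-adjoint unitaries of Lemma~\ref{Existence of unitary} to build both $\A$ and its complement out of the leftover corners, recover the generating MASAs from the projections to get generation, and verify freeness by the reduced-word combinatorics packaged in Lemma~\ref{lemma: technical 2-norm convergence lemma}. The one simplification the paper makes that you should adopt is to reuse the \emph{same} self-adjoint unitaries $u_i\in W^*(p_i,f_i)$ in the complement---defining $\A_i=A_i(1-p_i)+u_iA_1f_iu_i$ for $i\ge 2$, so that only $D_1=A_1(1-p_1)=\sum_{i\ge 2}A_1f_i$ gets cut and no additional partial isometries $w$ are introduced---which makes both the generation bootstrap and the alternating-word bookkeeping considerably cleaner than the more general reassembly you sketch.
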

\begin{proof}
    We begin by examining the case when $n = 2$. Let $A_1$ and $A_2$ be abelian tracial von Neumann algebras. As in the problem statement, let $p_1 \in A_1$ and $p_2 \in A_2$ be projections, and let $u_1, u_2 \in \U(M)$ be unitaries such that $\sum_{i = 1}^2 u_ip_iu_i^* = 1$. Let $\A = \sum_{i=1}^2 u_i A_i p_i u_i^*$. Then we show that $\A$ is freely complemented in $M = A_1 * A_2$. 

    For sake of notation, call $p_{11} = p_1$, $p_{12} = 1 - p_1$, $p_{21} = p_2$, and $p_{22} = 1 - p_2$. By Lemma \ref{Existence of unitary}, there is a unitary $u \in \{p_{12}, p_{21}\}''$ such that $up_{12}u^* = p_{21}$. Since this unitary can be taken to be self-adjoint, we actually have $up_{12}u = p_{21}$ and $up_{11}u = p_{22}$. Consider then the two algebras $\A_1 = A_1 p_{11} + uA_2p_{21}u$ and $\A_2 = uA_1p_{12}u + A_2 p_{22}$. Notice by Lemma \ref{Lemma: Independence of Choice of Unitaries} that $\A_1$ is unitarily conjugate to the original algebra $\A$. It suffices then to check that $\A_1$ and $\A_2$ are freely independent and generate all of $M$.

    That $\A_1$ and $\A_2$ generate all of $M$ is a direct consequence of the choice of unitary $u$. Indeed, $\{\A_1, \A_2\}''$ contains each of the corner algebras $A_1 p_{11}$, $uA_2p_{21}u$, $uA_1p_{12}u$, and $A_2 p_{22}$. In particular, it also contains each projection $p_{ij}$, for $1\leq i, j \leq 2$ and hence by the construction in Lemma \ref{Existence of unitary}, it contains the unitary $u$. After conjugation, it follows immediately then that $\{\A_1, \A_2\}''$ contains each of the corner algebras $A_jp_{ji}$, for $1 \leq i, j \leq 2$, hence $M = \{\A_1, \A_2\}''$.

    It remains to check the algebras $\A_1$ and $\A_2$ are freely independent. To do this, we begin by fixing some notation and examining the algebra $\{p_{12}, p_{21}\}''$ from which we chose the unitary $u$. For $i = 1, 2$, call $$v_{i} = (p_{ii} - \alpha) / \sqrt{\alpha - \alpha^2}$$ where $ \alpha = \tau(p_{11}) = \tau(p_{22})$. Notice that $v_1$ and $v_2$  both have trace 0 and are freely independent to each other, and, moreover,  with this normalization satisfy $v_{i}^2 = 1 + cv_{i}$, for $i = 1, 2$, where $c$ is the constant $(1 - 2\alpha) / \sqrt{\alpha - \alpha^2}$. 
    
    Consider then the set $S$ of all alternating words of the form $v_{i_1}v_{i_2} \cdots v_{i_k}$, where $i_l \neq i_{l+1}$ for $1\leq l \leq k$. Here if $k=0$, we allow the word to be just $1$. Then based on the previously mentioned observations and freeness of $A_{1}$ and $A_{2}$,  we see that $S$ forms an orthonormal basis for the $L^2$-space of $\{p_{12}, p_{21}\}''$. In particular, we can write the unitary $u$ as a $\| \ \|_2$-convergent sum $u = \sum_{\xi \in S} a_{\xi} \xi$ for some constants $a_\xi \in \complex$. 

    We return now to check that $\A_1$ and $\A_2$ are freely independent. Consider an alternating word $X = a_1 a_2 \cdots a_k$ of trace 0 terms $a_l \in \A_{i_l}$, for $1 \leq l \leq k$, with $i_l \neq i_{l+1}$ for $1 \leq l \leq k - 1$. Then we check $\tau(X) = 0$.

    Notice by linearity that any term $x$ with trace 0 in $\A_1$ can be decomposed as a sum $x = x_1 p_{11} + ux_2p_{21}u + \lambda v_1$ for some operators $x_1 \in A_1p_{11}$ and $x_2 \in A_2p_{21}$ with trace 0 and some constant $\lambda \in \complex$. Similarly, any term $y$ with trace 0 in $\A_2$ can be decomposed as a sum $y = u y_1 p_{12}u + y_2p_{22} + \gamma v_2$ for some operators $y_1 \in A_1p_{12}$ and $y_2 \in A_2p_{22}$ with trace 0 and some constant $\gamma \in \complex$. So without loss of generality, we may assume that each of the terms $a_l$, for $1 \leq l \leq k$, in the alternating word $X$ are either $v_i$, for $i = 1, 2$, or are trace 0 terms from one of the corner subalgebras $A_1 p_{11}$, $uA_2p_{21}u$, $uA_1p_{12}u$, or $A_2 p_{22}$.

    After this assumption regarding the form of the $a_l$, we claim, in the spirit of Remark \ref{remark: formal series expansion}, that $X$ can be written as a formal sum $X = \sum_{i = 1}^\infty w_i$, where each $w_i$ is an alternating word of trace 0 terms from the generator MASAs $A_1$ and $A_2$ of $M$. This sum is generated by applying a sequence of algebraic reductions, replacing each occurrence of $u$ with its series expansion in terms of the elements of $S$, and then applying additional algebraic simplifications. From here, it will follow by Lemma \ref{lemma: technical 2-norm convergence lemma} that $\tau(X) = 0$, since each of the alternating words $w_i$ will have trace 0.
    
    We present this sequence of algebraic simplifications of $X$ below notating them as steps (1) through (4).

    $(1)$ First, after expanding the word $X$ imagine that we have a subword of the form $u v_{i_1}v_{i_2} \cdots v_{i_r}u$, for some $r \leq k$, with $i_l \neq i_{l+1}$ for all $1\leq l \leq r-1$. Then since $up_{11}u = p_{22}$ and $up_{22}u = p_{11}$ by construction, we get that $uv_1u = v_2$ and $uv_2u = v_1$. But then 
    $$u v_{i_1}v_{i_2} \cdots v_{i_r}u
    =
    \prod_{j = 1}^r uv_{i_j}u
    =
    \prod_{j = 1}^r v_{3 - i_j}
    $$
    Thus, we can replace this substring with the product $v_{3- i_1}v_{3- i_2} \cdots v_{3 - i_r}$. That is, we swap each of the $v_1$ with $v_2$ and vice versa, and the $u$ terms disappear. 
    % \textcolor{red}{Not 100\% sure if we need it but same works if $u$ comes first and $u^*$ comes at the end}

    $(2)$ Next, look at the remaining locations in the word $X$ where a $u$ occurs. As we saw before, we can represent the unitary $u$ as a $\| \ \|_2$-norm convergent sum $u = \sum_{\xi \in S} a_{\xi} \xi$ for some constants $a_\xi \in \complex$. Substitute in this series expansion for each remaining term $u$ in the word $X$. We can expand the product $X$ by distributing over each of these sums to write $X$ as a formal sum $X = \sum_{i\geq 1} w_i^{(0)}$, where each $w_i^{(0)}$ is a product of elements $\xi \in S$ and trace 0 terms from the corner subalgebras $A_i p_{ij}$ for $1 \leq i, j \leq 2$. Although this formal series might not converge in any meaningful way, by Lemma \ref{lemma: technical 2-norm convergence lemma} we have that $\tau(X) = 0$ as long as each of these words $w_i^{(0)}$ has trace 0.

    $(3)$ Now we examine each of these words $w_i^{(0)}$. For an index $i$, consider if   somewhere in the product $w_i^{(0)}$ we have a segment $\xi_1 \xi_2$ with $\xi_1, \xi_2 \in S$. However, using the formulas $v_i^2 = 1 + cv_i$ we mentioned earlier, it is clear $\xi_1\xi_2$ is a finite linear combination of elements in $S$. Now replace $\xi_1 \xi_2$ with such a linear combination in the word $w_i^{(0)}$ and distribute this product out over the addition. Repeating this procedure, we can further rewrite $X$ as a formal sum $X = \sum_{i\geq 1} w_i^{(1)}$, where each $w_i^{(1)}$ is a product of elements $\xi \in S$ and trace 0 terms from the corner subalgebras $A_i p_{ij}$ for $1 \leq i, j \leq 2$, such that no $w_i^{(1)}$ contains adjacent terms $\xi_1, \xi_2 \in S$. Again, this series may not converge in any meaningful way, but note that we will still have $\tau(X) = 0$ as long as each of the words $w_i^{(1)}$ has trace 0.

    $(4)$ Now examine each of these new words $w_i^{(1)}$. View each $w_i^{(1)}$ as a product of the terms $v_1$ and $v_2$ and trace 0 terms $x_{i j}p_{i j} \in A_i p_{i j}$ for $1 \leq i,j \leq 2$. Imagine that for some index $l$ that the product $w_l^{(1)}$ contains a segment $x_{i j}p_{i j} v_i$. However, $p_{i j} v_i$ will be a scalar multiple of $p_i$, so $x_{i j}p_{i j} v_i$ will still be a trace 0 term in $A_ip_{i j}$. Hence, we can replace $x_{i j}p_{i j}v_i$ with $x_{i j}'p_{i j}$ for some trace 0 term $x_{i j}'$ in $A_ip_{i  j}$. By symmetry we can make a similar change if the product $w_l^{(1)}$ contains a segment $v_ix_{i j}p_{i j}$. After making these changes, call $w_l$ the resulting words. 

    After steps $(1)$-$(4)$, we have expressed $X$ as a formal sum $X = \sum_{i = 1}^\infty w_i$ in a way such that $\tau(X) = 0$ as long as each word $w_i$ has tace 0. We claim that in fact each $w_i$ is an alternating word of trace 0 terms from the generator MASAs $A_1$ and $A_2$. Indeed, by construction, each $w_l$ is a product of the terms $v_1$ and $v_2$ and trace 0 terms $xp_{i j} \in A_i p_{i  j}$. By steps $(2)$ and $(3)$ of the reduction, any two adjacent terms $v_{i_1}$ and $v_{i_2}$ in $w_{l}$ must lie in different generator MASAs. More over by step $(4)$, any two adjacent terms $xp_{i_1,j_1}$ and $v_{i_2}$ lie in different generator MASAs. Finally, notice that any two adjacent terms $xp_{i_1, j_1}$ and $yp_{i_2, j_2}$ have $(i_1, j_2) \neq (i_2, j_2)$. Indeed the only way after applying the reductions in steps $(2)$-$(4)$ we could have a segment $xp_{ij}yp_{ij}$ in $w_l$ would be if $X$ orinigally contained a segment $xp_{ij}u v_{i_1}v_{i_2} \cdots v_{i_r}uyp_{ij}$, for some $r \leq k$, with $i_l \neq i_{l+1}$ for all $1\leq l \leq r-1$. However, the reduction in step $(1)$ would have already replaced this segment with an alternating product of trace 0 terms from the generator MASAs $A_1$ and $A_2$. Thus, that any two adjacent terms $xp_{i_1, j_1}$ and $yp_{i_2, j_2}$  in $w_l$ must have $(i_1, j_2) \neq (i_2, j_2)$. But then this pair of adjacent terms $xp_{i_1, j_1}$ and $yp_{i_2, j_2}$ would either lie in different generator MASAs or be supported under orthogonal projections, and so have product 0.
    
    It follows then that $w_l$ is either equal to 0 or is an alternating word of trace 0 terms from the generator MASAs $A_1$ and $A_2$, and hence in either case $\tau(w_l) = 0$. By Lemma \ref{lemma: technical 2-norm convergence lemma} we will have $\tau(X) = 0$ as desired, so the result for $n = 2$ follows. 

    For $2 \leq n \leq \infty$ arbitrary, we use a similar argument. As in the problem statement, let $A_1, A_2, \dots, A_n$ be abelian tracial von  Neumann algebras and let $M = A_1 * \cdots * A_n$. Choose projections $p_i \in A_i$ and unitaries $u_i$, for $1\leq i\leq n$, such that $\sum_{i=1}^n u_i p_i u_i^* = 1$. Then we show the MASA $\A = \sum_{i=1}^n u_i A_ip_iu_i^*$ is FC. 

    Indeed, fix a partition of unity $\sum_{i=1}^n q_i = 1$ in $A_1$ such that $\tau(q_i) = \tau(p_i)$, for $1\leq i \leq n$. For $2 \leq i \leq n$, let $u_i \in \{p_i, q_i\}''$ be the self adjoint unitary from Lemma \ref{Existence of unitary} such that $u_i p_i u_i^* = q_i$. Then let $\A_1 = A_1q_1 + \sum_{i=2}^n u_iA_ip_iu_i^*$ and $\A_i = A_i(1-p_i) + u_iA_1q_iu_i^*$, for $2\leq i \leq n$. By Lemma \ref{Lemma: Independence of Choice of Unitaries}, $\A_1$ will be unitarily conjugate to $\A$. Moreover, after inductively repeating the argument for $n=2$, we see immediately that $\A_1, \dots, \A_n$ are freely independent and generate $M$. Hence, $\A$ is FC.
\end{proof}

Note that in the case when $A_1, A_2, \dots, A_n$ are separable diffuse abelian algebras, the above result shows that any of the maximal amenable MASA's in $L(\mathbb{F}_n)$ 
that are obtained by free reassembling of the $A_i$'s, highlighted in \cite{P21}, \cite{P23b}, are indeed FC. 

As an immediate consequence of Theorem \ref{thm: free reassemblies are freely complemented} and of a result in \cite{BDIP23}, we obtain the following positive solution to a ``non-separable version'' of Popa's FC problem: 

\begin{cor}\label{non-separable FC Theorem}
    Let $A_1, A_2, \dots, A_n$ be purely non-separable abelian tracial von Neumann algebras, $2\leq n \leq \infty$, and $M = A_1 * \dots * A_n$. Let $A\subset M$ be a purely non-separable MASA. 
    Then the following conditions are equivalent. 
    \begin{enumerate}
        \item $A$ is singular in $M$.
        \item $A$ is maximal amenable in $M$
        \item $A$ is freely complemented in $M$.         
        \end{enumerate} 
\end{cor}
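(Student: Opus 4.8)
The plan is to establish the cycle of implications $(3)\Rightarrow(2)\Rightarrow(1)\Rightarrow(3)$. The first two implications are already available in the literature and need only be quoted. Indeed, $(3)\Rightarrow(2)$ is exactly Popa's result from \cite{P83}: a diffuse amenable subalgebra that is freely complemented in a $\mathrm{II}_1$ factor is maximal amenable; since $A$ is a purely non-separable MASA it is in particular diffuse and abelian, hence amenable, so freely complemented forces maximal amenable. The implication $(2)\Rightarrow(1)$ is a general fact about MASAs in free products (or more simply: a maximal amenable MASA is automatically singular, since if it were not singular its normalizer would generate a strictly larger amenable algebra, contradicting maximal amenability — here one uses that the normalizing algebra of an abelian algebra is amenable when... ) — in any case this implication is standard and I would cite \cite{P83} or the relevant statement in \cite{BDIP23}.

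The substance is the implication $(1)\Rightarrow(3)$: a purely non-separable singular MASA $A\subset M=A_1*\cdots*A_n$ is freely complemented. Here I would invoke the structural result from \cite{BDIP23} concerning purely non-separable singular MASAs in such free products. The key input is that \cite{BDIP23} identifies, up to unitary conjugacy, the form of any purely non-separable singular MASA in $M$: it must be (unitarily conjugate to) a free reassembly $\sum_{i=1}^n u_i A_i p_i u_i^*$ for suitable projections $p_i\in A_i$ and unitaries $u_i\in\mathcal U(M)$ with $\sum_i u_i p_i u_i^* = 1$. Granting this classification, Theorem \ref{thm: free reassemblies are freely complemented} applies verbatim and yields that $A$ is freely complemented in $M$, since free complementation is preserved under unitary conjugacy (if $M = \mathcal A * Q$ and $w\in\mathcal U(M)$, then $M = (w\mathcal A w^*) * (wQw^*)$).

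The main obstacle is locating and correctly applying the classification statement from \cite{BDIP23}: one must check that its hypotheses match the present setup (all $A_i$ purely non-separable, $A$ purely non-separable and singular) and that its conclusion really does produce a free reassembly in the exact sense of Theorem \ref{thm: free reassemblies are freely complemented}, including the normalization $\sum_i u_i p_i u_i^* = 1$. If \cite{BDIP23} instead gives the weaker conclusion that $A$ contains such a reassembly with a defect projection, one would additionally need to argue that maximality of $A$ as an abelian subalgebra forces the reassembly to exhaust $M$, i.e.\ that the defect is zero; this is a short argument using that $\mathcal A$ from Theorem \ref{thm: free reassemblies are freely complemented} is already a MASA and two MASAs one of which contains the other must coincide. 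Everything else is bookkeeping: assembling the three implications into the stated equivalence.
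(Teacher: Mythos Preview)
Your proposal is correct and follows exactly the paper's route: the paper likewise dismisses $(3)\Rightarrow(2)\Rightarrow(1)$ as clear, and for $(1)\Rightarrow(3)$ invokes Corollary~3.7 of \cite{BDIP23} to write $A=\sum_i u_iA_ip_iu_i^*$ with $\sum_i u_ip_iu_i^*=1$, then applies Theorem~\ref{thm: free reassemblies are freely complemented}. Your contingency about a possible ``defect projection'' is unnecessary, since Corollary~3.7 of \cite{BDIP23} already delivers the full normalization.
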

\begin{proof}
    That $(3) \Rightarrow (2) \Rightarrow (1)$ is clear. Conversely to see $(1) \Rightarrow (3)$, let $A \subset M$ be a purely non-separable singular MASA. By Corollary 3.7 of \cite{BDIP23}, there exist projections $p_i \in A_i$ and unitaries $u_i \in \U(M)$, for $1\leq i \leq n$, such that $\sum_{i=1}^n u_i p_i u_i^* = 1$ and $\A = \sum_{i=1}^n u_i A_ip_i u_i^*$. The result then follows directly from Theorem \ref{thm: free reassemblies are freely complemented}. 
\end{proof}

\begin{remark}
Let $M=A^{*n}$ with $A$ a purely non-separable abelian tracial von Neumann algebra. As noted in \cite{BDIP23}, Section 4.2, our positive answer to the FC Problem for the free reassembly algebras furnishes a partial description of the outer automorphism group ${\rm Out}(M):={\rm Aut}(M)/{\rm Int}(M)$ by relating it with automorphisms of the sans-core, defined as follows in \cite{BDIP23}. The \textit{sans-core} is the unique (up to unitary conjugacy) maximal abelian purely non-separable $*$-subalgebra $\A\subset \M := B(\ell^2 K) \overline{\otimes} M $ (for some $|K|\geq 2^{|\U(M)|}$) generated by finite projections with $\A\subset 1_{\A} \M 1_{\A}$ singular \cite{BDIP23}. As suggested in \cite{BDIP23}, we obtain that there is an injective group morphism from the group $G$ of $\operatorname{Tr}$-preserving automorphisms of the unfolded form of the sans-core $\mathcal{A}^{\rm ns}_M$ (c.f. \cite{BDIP23}) to $\operatorname{Out}(M)$.

Indeed,  let $\theta\in \operatorname{Aut} (M)$. Note $\theta$ is automatically trace-preserving since $M$ is a factor. Now, $\theta$ sends each generator copy of $A$ to another purely non-separable singular MASA in $M$, and moreover, by freeness, is completely determined by where it sends each of these copies. But by Theorem \ref{non-separable FC Theorem}, every purely non-separable singular MASA in $M$ is freely complemented. So,  automorphisms $\theta$ of $M$ are in bijection with free decompositions of $M$ into $n$ free reassembly algebras. By definition of the sans-core, any $\rm{Tr}$-preserving automorphism of $\A^{\rm ns}_M$ determines a unique (up to unitary conjugation) such decomposition of $M$, and the mapping thus obtained is easily seen to be a homomorphism.

\end{remark}
In summary, we have the following corollary.

\begin{cor}
    Let $M=A^{*n}$ with $A$ a purely non-separable abelian tracial von Neumann algebra. Let $\mathcal{A}^{\rm ns}_M$ denote the unfolded form of the sans-core of $M$ (c.f. \cite{BDIP23}). Then there is an injective group morphism $\Phi:\rm{Aut}(\mathcal{A}^{\rm ns}_M, \rm{Tr}) \to \rm{Out}(M)$.
\end{cor}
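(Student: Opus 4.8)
The plan is to make precise the sketch in the preceding remark: we must exhibit the map $\Phi$, check it is well-defined (independent of choices up to inner automorphisms), check it is a homomorphism, and check injectivity. Throughout, write $\mathcal{M} = B(\ell^2 K)\overline{\otimes} M$ and let $\A \subset \mathcal{M}$ be the sans-core as defined in \cite{BDIP23}, with $\A^{\rm ns}_M$ its unfolded form. First I would recall from \cite{BDIP23} the precise sense in which $\A$ is ``unique up to unitary conjugacy'': any two copies of $\A$ inside $\mathcal{M}$ are conjugate by a unitary of $\mathcal{M}$, and hence any $\operatorname{Tr}$-preserving automorphism of $\A^{\rm ns}_M$ is realized (in the amplified algebra) by conjugation by a partial isometry/unitary, giving a well-defined free decomposition of $M$ into $n$ free reassembly pieces up to unitary conjugacy in $M$.

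Next I would set up the bijection that underlies everything. Fix once and for all the standard free decomposition $M = A_1 * \cdots * A_n$ with $A_i \cong A$. Given $\theta \in \operatorname{Aut}(M)$, since $M$ is a factor $\theta$ is trace-preserving, so $\theta(A_i)$ is again a purely non-separable MASA in $M$; moreover it is singular: if it were not singular it would not be maximal amenable (by \cite{P83}), contradicting that $A_i$ is maximal amenable and $\theta$ an automorphism. By Theorem \ref{non-separable FC Theorem} applied to $M$, each $\theta(A_i)$ is freely complemented, and in fact by Corollary 3.7 of \cite{BDIP23} it is of free reassembly form; since $\theta$ respects freeness, $(\theta(A_1), \dots, \theta(A_n))$ is an ordered free decomposition of $M$ into $n$ free reassembly algebras. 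Conversely, any such ordered free decomposition $(\B_1, \dots, \B_n)$ with each $\B_i$ purely non-separable singular is isomorphic, as a decorated inclusion, to $(A_1, \dots, A_n) \subset M$, and this isomorphism extends (by Voiculescu / the universal property of free products) to an automorphism of $M$; two such extensions differ by an automorphism fixing all the $\B_i$, which by freeness is the identity. Hence $\theta \mapsto (\theta(A_1), \dots, \theta(A_n))$ identifies $\operatorname{Aut}(M)$ with the set of ordered free reassembly decompositions of $M$, and quotienting by $\operatorname{Int}(M)$ identifies $\operatorname{Out}(M)$ with such decompositions \emph{up to simultaneous unitary conjugacy}.

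Now I would construct $\Phi$. Given $\sigma \in \operatorname{Aut}(\A^{\rm ns}_M, \operatorname{Tr})$, by the defining property of the sans-core in \cite{BDIP23}, $\sigma$ is implemented by conjugation inside $\mathcal{M}$ and thereby determines a unique-up-to-unitary-conjugacy ordered free reassembly decomposition of $M$; by the bijection of the previous paragraph this is $(\theta_\sigma(A_1), \dots, \theta_\sigma(A_n))$ for a $\theta_\sigma \in \operatorname{Aut}(M)$ well-defined modulo $\operatorname{Int}(M)$. Set $\Phi(\sigma) := [\theta_\sigma] \in \operatorname{Out}(M)$. That $\Phi$ is a group homomorphism is the ``easily seen'' point of the remark: composing automorphisms of $\A^{\rm ns}_M$ composes the corresponding conjugations in $\mathcal{M}$, which composes the free decompositions, which composes the $\theta_\sigma$ modulo inner; one checks the bookkeeping of the unfolding is compatible with composition. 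For injectivity: suppose $\Phi(\sigma)$ is trivial, i.e. $\theta_\sigma \in \operatorname{Int}(M)$. Then the free reassembly decomposition attached to $\sigma$ is unitarily conjugate (in $M$, hence in $\mathcal{M}$) to the original $(A_1, \dots, A_n)$; conjugating the $A_i$ by a unitary of $M \subset \mathcal{M}$ is a $\operatorname{Tr}$-preserving inner move on $\mathcal{M}$ that fixes the sans-core $\A$ up to the identification, so by the uniqueness clause in the definition of $\A^{\rm ns}_M$ the automorphism $\sigma$ of $\A^{\rm ns}_M$ is trivial. Hence $\ker \Phi$ is trivial and $\Phi$ is injective, as claimed.

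The main obstacle I anticipate is not conceptual but the careful matching of the ``unfolding'' construction of $\A^{\rm ns}_M$ from \cite{BDIP23} with the combinatorics of ordered free decompositions: one needs that a $\operatorname{Tr}$-preserving automorphism of the unfolded sans-core records exactly the data of an ordered $n$-tuple of free reassembly MASAs up to simultaneous conjugacy, with no loss and no extra ambiguity, and that this correspondence is functorial under composition. This is precisely where the hypotheses ``purely non-separable'' and ``$M = A^{*n}$'' are used — they make every singular MASA a free reassembly algebra (Corollary 3.7 of \cite{BDIP23} together with Theorem \ref{non-separable FC Theorem}) and make the sans-core an honest invariant. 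Once that dictionary is pinned down, well-definedness, the homomorphism property, and injectivity all reduce to transporting the uniqueness-up-to-unitary-conjugacy statement for the sans-core across the dictionary, as outlined above.
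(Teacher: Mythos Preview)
Your proposal is correct and follows essentially the same approach as the paper: the corollary is stated as a summary of the preceding Remark, and your outline expands that same sketch---identify $\operatorname{Aut}(M)$ with ordered free decompositions of $M$ into $n$ free reassembly MASAs (using Theorem~\ref{non-separable FC Theorem} and Corollary~3.7 of \cite{BDIP23}), pass to $\operatorname{Out}(M)$ by modding out simultaneous unitary conjugacy, and then observe that $\operatorname{Tr}$-preserving automorphisms of the unfolded sans-core parametrize exactly such decompositions up to conjugacy. Your identification of the one genuine technical point (matching the unfolding of $\A^{\rm ns}_M$ with ordered decompositions functorially, which the paper leaves implicit) is apt, and the paper does not go beyond the level of detail you give.
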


%%%%%%%%%%%%%%%%%%%%%%%%%%%%%%%%%%%%%%%%%%%%%%%%%%%%%%%%%%%%%%%%%%%%%%%%%%%%%%%%%%%%%%%%%%%%%%%%%%%%%%%%%%%%%%%%%%%%%%%%%%%%%%%%%%%%%%%%%%%%%%%%%%%%%%%%
\section{The weak FC property for some concrete MASAs}

In this section we discuss Popa's weak FC conjecture mentioned in the introduction, which predicts that any maximal amenable von Neumann subalgebra $B$ of a free group factor $M$ 
admits Haar unitaries $u\in M$ that are free independent to $B$. We in fact concentrate on the case $B$ is abelian, 
 discussing each one of the concrete cases of maximal amenable MASAs that are known, mentioned already in the introduction, and which we remind here for convenience. We will show that all these examples satisfy the conjecture.

\begin{example} (cf. \cite{P83}, \cite{P23b}) 
    Consider the MASA $A_g = \{u_g\}''$ of $L\mathbb{F}_n$ coming from an element $g \in \mathbb{F}_n$ with the cyclic group $g^{\mathbb{Z}}$ maximal abelian. The MASA $A_g$ was shown to be maximal amenable in \cite{P83}. However, in the case when the cyclic group $g^{\mathbb{Z}}$ is not freely complemented as a subgroup of $\mathbb{F}_n$ it remains open whether $A_g$ is FC. 
    For instance, if $a, b\in  F_2$ are the generators and $g=aba^{-1}b^{-1}$, then it is not known whether $A_g=\{u_g\}''$ is FC or not. 
    Note however that if $h \in \mathbb{F}_n$ is any word freely independent to $g$ then $A_g$ and $A_h$ will be freely independent in $L\mathbb{F}_n$. 
    The construction of such a freely independent word $h$ is immediate, thus each of these algebras $A_g$ satisfies the weak FC property. 
\end{example}

\begin{example} (cf. \cite{BP23}) 
    View $L\mathbb{F}_n = M_1 * \cdots * M_n$ for diffuse amenable subalgebras $M_i$, $1 \leq i \leq n$, and for each $i$ fix a semicircular element $s_i \in M_i$ 
    (not necessarily generating $A_i$!). For any tuple of real numbers $t \in \mathbb{R}^n$, call $s(t) = \sum_{i = 1}^n t_i s_i$ and let $A(t) = \{s(t) \}'' \subset L\mathbb{F}_n$. Then as shown in \cite{BP23}, each of the MASAs $A(t) \subset M$, with $t$ having at least two non-zero entries,  is maximal amenable in $L\mathbb F_n$. While it is not clear if these $A(t)$ are FC, note that if $t$ and $t'$ are perpendicular vectors in $\mathbb{R}^n$ then $A(t)$ is free independent to $A(t')$. Indeed, let $N \subset M$ be the von Neumann subalgebra generated by the semicircular elements $s_i$, for $1 \leq i \leq n$. Then we may view $N$ as a subalgebra of bounded operators on free Fock Space $\mathcal{F}(\mathbb{R}^n)$ (c.f. \cite{V85}). Here $s(t) = \ell(t) + \ell(t)^*$, where $\ell(t)$ is the canonical creation operator on $\mathcal{F}(\mathbb{R}^n)$. A well known theorem states if $t \perp t'$ then $\ell(t) + \ell(t')^*$ and $\ell(t) + \ell(t')^*$ will be freely independent, and hence $A(t)$ will be freely independent to $A(t')$. 
\end{example}

Another example of a MASA of the free group factor $L \mathbb{F}_n$  that we have mentioned in the introduction is the so called radial subalgebra $A_r$. That is the algebra generated by the element $\sum_{i = 1}^n u_i + u_i^*$ where $u_i$, for $1 \leq i \leq n$, are the freely independent Haar unitaries that generate $L \mathbb{F}_n$. The radial MASA was introduced in \cite{Py81} and shown to be maximal amenable in \cite{CFRW09}. Unlike the previous two classes of MASAs, it is not clear a priori that the radial subalgebra $A_r$ satisfies the weak FC conjecture. To that end, we prove the following lemma.

\begin{lemma}\label{weak FC property lemma}
    Let $A_1$ and $A_2$ be abelian tracial von Neumann algebras, and let $M = A_1 * A_2$ be their free product. Let $B_1 \subset A_1$, $B_2 \subset A_2$ be von Neumann subalgebras. Suppose $u_1 \in A_1$ and $u_2 \in A_2$ are unitaries such that $E_{B_i}^{A_i}(u_i) = 0$, for $i = 1, 2$. Then $u = u_1u_2$ is freely independent to $B_1 * B_2$.
\end{lemma}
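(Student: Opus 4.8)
The plan is to verify the defining moment condition for free independence directly: show that any alternating product of trace-zero elements, drawn alternately from $W^*(u)$ and from $B_1 * B_2$, has trace $0$. An element of $W^*(u)$ with trace $0$ is a finite (or $\|\cdot\|_2$-norm convergent) combination of the powers $u^k = (u_1u_2)^k$ and $(u^*)^k = (u_2^*u_1^*)^k$ for $k \geq 1$; by linearity and Lemma \ref{lemma: technical 2-norm convergence lemma} (in the spirit of Remark \ref{remark: formal series expansion}) it suffices to treat the case where each $W^*(u)$-letter is literally one of these monomials. Likewise a trace-zero element of $B_1 * B_2$ is, up to $\|\cdot\|_2$-approximation and the same lemma, an alternating word in trace-zero elements $b \in B_1$ and $b' \in B_2$. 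So the whole problem reduces to computing $\tau$ of a long word in $M = A_1 * A_2$ whose letters are: trace-zero elements of $B_1\subset A_1$; trace-zero elements of $B_2\subset A_2$; and blocks $u_1u_2u_1u_2\cdots$ or $u_2^*u_1^*u_2^*u_1^*\cdots$.

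The key point is bookkeeping: I would show that after merging adjacent letters that lie in the same $A_i$, the resulting word is still \emph{alternating} in $A_1, A_2$ and still has \emph{all letters of trace zero}, so that freeness of $A_1$ and $A_2$ forces $\tau = 0$. The trace-zero claim for the $A_i$-letters is where the hypothesis $E^{A_i}_{B_i}(u_i) = 0$ enters: whenever a $u_i$ (or $u_i^*$) gets multiplied on the left or right by a trace-zero element $b$ of $B_i$, the product $bu_i$ lies in $A_i$ and has trace $\tau(bu_i) = \tau(E^{A_i}_{B_i}(bu_i)) = \tau(b\,E^{A_i}_{B_i}(u_i)) = 0$ by the module property of the conditional expectation; similarly $bu_iu_i = bu_i^2$ has the same form, and a bare $u_i$ flanked by letters from $A_2$ keeps whatever trace it has — but here I must be careful, since $\tau(u_i)$ need not be $0$. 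This is the one genuine subtlety: a block like $u_2\cdots u_1$ that begins and ends with $A_2$-letters contributes a bare $u_1$ in the middle with possibly nonzero trace. I would handle this by writing $u_i = (u_i - \tau(u_i)) + \tau(u_i)1$, expanding, and checking that the "$\tau(u_i)1$" pieces either collapse adjacent blocks in a way that still leaves an alternating trace-zero word, or produce a word in which a $B_i$-letter becomes adjacent to another $A_i$-letter and gets absorbed — again using $E^{A_i}_{B_i}(u_i) = 0$ to keep traces zero. One must also confirm that the endpoints of the big word behave (the outermost letters come from $B_1 * B_2$-words or from $u$-blocks, and after all merging the word cannot collapse entirely to a scalar unless some intermediate letter was forced to $0$).

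The main obstacle, then, is precisely this endpoint/bare-generator analysis: organizing the case distinctions (does a $u$-block abut a $B_1$-letter, a $B_2$-letter, or another $u$-block on each side; does it start/end with $u_1$ or $u_2$) so that in every case one arrives at an alternating word in $A_1 * A_2$ with trace-zero letters, or at a word containing a product of two letters supported in the same $A_i$ whose conditional-expectation-onto-$B_i$ vanishes. I expect the cleanest writeup to fix, once and for all, a normal form: interleave the $B_1*B_2$-words and the $u^{\pm}$-blocks, repeatedly merge same-$A_i$ neighbors using $E^{A_i}_{B_i}(u_i)=0$ and $v_i^2$-type relations as needed, invoke Lemma \ref{lemma: technical 2-norm convergence lemma} to pass from the formal expansion back to the actual trace, and then quote freeness of $A_1, A_2$ to conclude $\tau(X) = 0$. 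Since this yields $\tau$ of every alternating trace-zero word vanishes, $W^*(u)$ is free from $B_1 * B_2$, and $u = u_1u_2$ is a Haar unitary because $\tau(u^k) = \tau((u_1u_2)^k) = 0$ for all $k \neq 0$ by freeness of $A_1,A_2$ applied to the alternating word $u_1u_2\cdots u_1u_2$ (after the same splitting $u_i = (u_i-\tau(u_i)) + \tau(u_i)$, noting the leftover scalar terms give shorter alternating words that also have trace $0$).
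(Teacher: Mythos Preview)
Your approach is exactly the paper's: reduce to an alternating word in trace-zero letters from $A_1$ and $A_2$ and invoke freeness of $A_1, A_2$. But you have created a difficulty that is not there. You write that ``$\tau(u_i)$ need not be $0$'' and then plan an elaborate case analysis via the splitting $u_i = (u_i - \tau(u_i)) + \tau(u_i)1$. In fact the hypothesis $E_{B_i}^{A_i}(u_i) = 0$ immediately gives
\[
\tau(u_i) \;=\; \tau\bigl(E_{B_i}^{A_i}(u_i)\bigr) \;=\; 0,
\]
since the conditional expectation is trace-preserving. Hence every $u_i$ and $u_i^*$ is already a trace-zero element of $A_i$, each block $(u_1u_2)^k$ or $(u_2^*u_1^*)^k$ is already an alternating word of trace-zero letters from $A_1, A_2$, and there is no ``bare-generator'' obstacle whatsoever.

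With this observation the argument collapses to the one-paragraph verification the paper gives. The only places where two adjacent letters could lie in the same $A_i$ are at the interface between a $u$-block and a $B$-word: there one sees products of the form $u_i^{\pm 1} b_i$ or $b_i u_i^{\pm 1}$ (trace zero since $u_i \perp B_i$, as you correctly argue), or, when the $B$-word is a single $b_i$ sandwiched between two blocks, $u_i^{\pm 1} b_i u_i^{\mp 1} = b_i$ by commutativity of $A_i$. After these one-step absorptions the word is alternating in $A_1, A_2$ with all letters trace-zero, so $\tau(X)=0$. Your use of Lemma~\ref{lemma: technical 2-norm convergence lemma} to reduce to monomials in $u$ and to alternating $B$-words is fine (the paper just says ``without loss of generality''), but the remark about $bu_i u_i = bu_i^2$ is irrelevant (no $u_iu_i$ adjacency ever arises), and the entire ``endpoint/bare-generator analysis'' and the $u_i$-centering should be deleted.
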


\begin{proof}
    Let $u = u_1u_2$ and let $A = \{u\}''$.  Note that $A$ is a diffuse abelian subalgebra of $M$ with the set of all powers $(u_1u_2)^i$ and $(u_2^*u_1^*)^i$, for $i\geq 1$, forming an orthogonal basis for $L^2A \ominus \complex 1$.

    To see that $A$ is free from $B_1*B_2$, consider an alternating word $X = a_1 a_2 \dots a_k$ of trace 0 terms with $a_j \in B_1 * B_2$ for $j$ even and $a_j \in A$ for $j$ odd. It suffices to check that $\tau(X) = 0$. Without loss of generality, for any even index $j$ we may assume that $a_j \in B_1 * B_2$ is itself an alternating word of trace 0 letters from $B_1$ and $B_2$ respectively. Similarly, for any odd index $j$, we may assume $a_j$ is a power of either $u_1u_2$ or $u_2^*u_1^*$. After these assumptions, $X$ becomes an alternating word of trace 0 terms of the form $u_1u_2$ or $u_2^*u_1^*$, or trace 0 terms from $B_1$ and $B_2$. 
    
    If it were the case such a product was an alternating word of trace 0 terms from $A_1$ and $A_2$, then by free independence of $A_1$ and $A_2$ we would have $\tau(X) = 0$. However, at present, this might not be the case. More precisely, it might be the case that for some $1 \leq i \leq 2$ and some trace 0 term $b_i \in B_i$ the product $X$ contains one of the terms $u_i^* b_i$, $b_i u_i$, or $u^*_i b_i u_i$. However, since $u_i \in A_i$ was assumed to be orthogonal to $B_i$ we have $\tau(u_i^* b_i) = \tau(b_i u_i) = 0$, so that $u_i^* b_i$ and  $b_i u_i$ are themselves trace 0 terms in $A_i$. Similarly, since $A_i$ was abelian we have $u^*_i b_i u_i = b_i$ is also a trace 0 term in $A_i$. With this observation, it is clear that $X$ is still in fact an alternating word of trace 0 terms from $A_1$ and $A_2$, and hence $\tau(X) = 0$, as desired. 
\end{proof}

\begin{cor}
    The radial MASA $A_r \subset L\mathbb{F}_n$ satisfies the weak FC property.
\end{cor}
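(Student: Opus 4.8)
The plan is to realize $A_r$ inside a free product of the ``even'' subalgebras of the $A_i$ and then feed the right unitary into Lemma~\ref{weak FC property lemma}. Fix $2 \le n < \infty$ and write $L\mathbb{F}_n = A_1 * \cdots * A_n$ with $A_i = \{u_i\}''$, and set $B_i := \{u_i + u_i^*\}'' \subset A_i$. Since $r = \sum_{i=1}^n (u_i + u_i^*)$ lies in the von Neumann algebra generated by $B_1, \dots, B_n$, which (the $A_i$ being freely independent, hence the $B_i$ too) equals $B_1 * \cdots * B_n =: B$, we have $A_r = \{r\}'' \subseteq B$. As any unitary freely independent of $B$ is also freely independent of the subalgebra $A_r$, it suffices to exhibit a Haar unitary of $L\mathbb{F}_n$ freely independent of $B$.

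For $i = 1, 2$, I would choose a self-adjoint unitary $w_i \in A_i$ with $E^{A_i}_{B_i}(w_i) = 0$. Such a $w_i$ exists because, under the identification $A_i \cong L^\infty(\mathbb{T})$ sending $u_i$ to the generating Haar unitary, $B_i$ is exactly the subalgebra of functions invariant under $z \mapsto \bar z$ (the ``even'' functions), so one may take $w_i$ to be any odd $\{\pm 1\}$-valued function --- this is the element ``$|x|/x$'' of the introduction. Note $\tau(w_i) = \tau(E^{A_i}_{B_i}(w_i)) = 0$. Put $u := w_1 w_2$. Since $w_1 \in A_1$ and $w_2 \in A_2$ are freely independent self-adjoint unitaries of trace $0$, for every $m \neq 0$ the moment $\tau(u^m)$ (resp.\ $\tau((u^*)^m) = \tau((w_2 w_1)^m)$) is the trace of an alternating word of trace-zero letters from $A_1$ and $A_2$, hence $0$ by freeness; thus $u$ is a Haar unitary, and clearly $u \in L\mathbb{F}_n$.

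By Lemma~\ref{weak FC property lemma}, applied to $A_1, A_2$, the subalgebras $B_1 \subset A_1$, $B_2 \subset A_2$, and the unitaries $w_1, w_2$, the algebra $\{u\}''$ is freely independent of $B_1 * B_2$ inside $A_1 * A_2$. Hence $\{u\}'' \vee B_1 \vee B_2 \subseteq A_1 * A_2$ decomposes as $\{u\}'' * B_1 * B_2$. The factors $A_3, \dots, A_n$ are freely independent of $A_1 * A_2$, so by associativity of the free product the algebra generated by $\{u\}''$, $B_1$, $B_2$, $A_3, \dots, A_n$ is $\{u\}'' * \big(B_1 * B_2 * A_3 * \cdots * A_n\big)$; in particular $\{u\}''$ is freely independent of $B_1 * B_2 * A_3 * \cdots * A_n \supseteq B_1 * \cdots * B_n = B \supseteq A_r$. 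Therefore $u$ is a Haar unitary freely independent of $A_r$, which is the weak FC property. (When $n = 2$ this last step is nothing but Lemma~\ref{weak FC property lemma} itself.)

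The only slightly delicate point is the associativity manipulation in the last paragraph --- passing from ``$\{u\}''$ free from $B_1 * B_2$ inside $A_1 * A_2$'' to ``$\{u\}''$ free from $B_1 * B_2 * A_3 * \cdots * A_n$ inside $L\mathbb{F}_n$'' --- but this is a routine property of free products; alternatively one can simply repeat the word-reduction argument in the proof of Lemma~\ref{weak FC property lemma} with $n$ free factors in place of two. Everything else is bookkeeping.
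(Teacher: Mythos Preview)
Your proof is correct and follows essentially the same approach as the paper: identify $B_i=\{u_i+u_i^*\}''\subset A_i$ with the even subalgebra, pick order-two trace-zero unitaries $w_i\in A_i$ orthogonal to $B_i$, and apply Lemma~\ref{weak FC property lemma} to $u=w_1w_2$. You are simply more explicit than the paper about two points it leaves implicit---that $u$ is actually a Haar unitary, and the associativity step passing from freeness with $B_1*B_2$ inside $A_1*A_2$ to freeness with $B_1*\cdots*B_n\supset A_r$ inside $L\mathbb{F}_n$---both of which are handled correctly.
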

\begin{proof}
    Let $u_1, u_2, \dots u_n$ be the freely independent Haar unitaries that generate $\mathbb{F}_n$. For $1 \leq i \leq n$, let $A_i = \{u_i \}''$ so that $\mathbb{F}_n = A_1 * \cdots * A_n$. Furthermore, for $1 \leq i \leq n$, call $B_i = \{u_i + u_i^* \}'' \subset A_i$. Note that the inclusion $B_i \subset A_i$ is isomorphic to the inclusion of the subalgebra of even functions on $[-1, 1]$ inside $L^\infty([-1, 1], \mu)$. Using this correspondence, we see that there exist unitaries $u_i \in A_i$ with $E_{B_i}^{A_i}(u_i) = 0$. Namely, we can let $u_i$ correspond to the even function $|x|/x \in L^\infty([-1, 1], \mu)$. By Lemma \ref{weak FC property lemma}, any one of the Haar unitaries $u_i u_j$, for distinct $1 \leq i, j \leq n$, will be freely independent to the radial subalgebra. 
\end{proof}
\begin{remark}
    Note that although any of these abelian subalgebras $B = \{u_iu_j\}'' \subset L(\mathbb{F}_n)$ is freely independent to the radial subalgebra $A_r$ we do not have $L(\mathbb{F}_n) = A_r\vee B$, even when $n=2$. To see this, note that $B \subset \{u_i, u_j\}''$. Since $u_i$ and $u_j$ were chosen to be freely independent unitaries of order 2 and trace 0, we have $\{u_1, u_2\} \cong L(\mathbb{Z}_2 * \mathbb{Z}_2)$ is amenable. Since $B$ is diffuse abelian but not maximal amenable, by \cite{P83} it cannot be FC and hence $A_r \vee B \neq L(\mathbb{F}_n)$.
\end{remark}

Combining the previous corollary with Corollary \ref{non-separable FC Theorem}, we can further prove an analog of the weak FC conjecture for non-separable free products. To this end, we prove a sequence of additional technical lemmas. We begin with a modification of the previous result. 

\begin{lemma}\label{Lemma: second free independent haar unitary lemma}
    Let $A_1$ and $A_2$ be abelian tracial von Neumann algebras, with $A_1$ diffuse, and let $M = A_1 * A_2$ be their free product. Let $B_1 \subset A_1$, $B_2 \subset A_2$ be von Neumann subalgebras. Suppose $u_1 \in A_1$  is a Haar unitary, and $u_2 \in A_2$ is unitary such that $E_{B_2}^{A_2}(u_2) = 0$. Then $u = u_2u_1u_2^*$ is freely independent to $B_1 * B_2$.
\end{lemma}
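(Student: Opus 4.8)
The plan is to follow almost verbatim the proof of Lemma~\ref{weak FC property lemma}, the only new feature being that $u_1$ is now conjugated on \emph{both} sides by $u_2$. First I would set $u = u_2u_1u_2^*$ and $A = \{u\}''$; since conjugation by the unitary $u_2$ is a trace-preserving $*$-automorphism of $M$, $u$ is again a Haar unitary, so $A$ is diffuse abelian and the powers $u^k = u_2u_1^ku_2^*$, $k \in \integers\setminus\{0\}$, form an orthonormal basis of $L^2A \ominus \complex 1$. To prove that $A$ is freely independent from $N := B_1 * B_2$ it suffices to show $\tau(X) = 0$ for every alternating word $X = a_1a_2\cdots a_m$ of trace-$0$ elements whose letters alternate between $A$ and $N$. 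Exactly as in Lemma~\ref{weak FC property lemma} (or via Lemma~\ref{lemma: technical 2-norm convergence lemma}), by linearity and $\|\cdot\|_2$-density we may assume each letter in $A$ is a single power $u^{k_j} = u_2u_1^{k_j}u_2^*$ with $k_j \neq 0$, and each letter in $N$ is an alternating word of trace-$0$ letters from $B_1$ and $B_2$.

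Substituting these expressions, $X$ becomes a finite word $W$ whose letters lie in $A_1$ (the powers $u_1^{k_j}$ and the trace-$0$ letters from $B_1$) and in $A_2$ (the unitaries $u_2, u_2^*$ and the trace-$0$ letters from $B_2$). The strategy is to collapse adjacent same-algebra letters and verify that $W$ reduces to a genuine alternating word of trace-$0$ letters from $A_1$ and $A_2$, whence $\tau(X) = \tau(W) = 0$ by free independence of $A_1$ and $A_2$. The elementary facts needed are: $\tau(u_2) = \tau(u_2^*) = 0$, and for $b \in B_2$ with $\tau(b) = 0$ the products $u_2^*b$ and $bu_2$ are again trace-$0$ elements of $A_2$ — all three of these because $E_{B_2}^{A_2}(u_2) = 0$ together with the $B_2$-bimodule property of the conditional expectation; also $u_2^*bu_2 = b$ because $A_2$ is abelian; and $\tau(u_1^k) = 0$ for $k \neq 0$ since $u_1$ is a Haar unitary.

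With these in hand one checks the two structural claims. No two $A_1$-letters are ever adjacent in $W$: each $u_1^{k_j}$ sits between the $u_2, u_2^*$ of its own block, while a $B_1$-letter of an even block is flanked by $B_2$-letters or, at the ends of its block, by some $u_2^{\pm1}$ or by the ends of $X$. And each maximal run of $A_2$-letters has one of the forms $u_2^{\pm 1}$, $u_2^*b$, $bu_2$, $u_2^*bu_2$, or a lone $B_2$-letter $b$, each of which simplifies to a single trace-$0$ element of $A_2$; in particular no run reduces to a scalar, and the factor $u_2^*u_2$ never occurs, since consecutive $A$-blocks of $X$ are always separated by a nonempty $N$-block. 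Hence the $u_1^{k_j}$ letters survive and $W$ reduces to a nonempty alternating word of trace-$0$ letters from $A_1$ and $A_2$, so $\tau(X) = 0$. I expect the only delicate point to be precisely this last bookkeeping at the junctions between the $A$-blocks and the $N$-blocks — in particular the short-block cases ($N$-letter of length $1$) and the behavior at the two ends of $X$ — which is handled exactly as in Lemma~\ref{weak FC property lemma}; once that is in place, freeness of $A_1$ and $A_2$ finishes the proof.
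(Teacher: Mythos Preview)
Your proposal is correct and follows essentially the same approach as the paper's own proof: reduce the $A$-letters to powers $u_2u_1^{k}u_2^*$ and the $N$-letters to alternating $B_1/B_2$ words, then check that after absorbing the junction products $u_2^*b$, $bu_2$, $u_2^*bu_2$ (with $b\in B_2$ of trace $0$) the expanded word is a genuine alternating trace-$0$ word in $A_1$ and $A_2$. Your write-up is in fact slightly more explicit than the paper's in listing the possible $A_2$-runs and in noting why the collapse $u_2^*u_2$ cannot occur, but the argument is the same.
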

\begin{proof}
    The proof is similar to that of Lemma \ref{weak FC property lemma}. Let $u = u_2u_1u_2^*$ and let $A = \{u\}''$.  Note that $A$ is a diffuse abelian subalgebra of $M$ with the set of all powers $u_2u_1^iu_2^*$, for $i\neq 0$, forming an orthogonal basis for $L^2A \ominus \complex 1$.

    To see that $A$ is free from $B_1 * B_2$, consider an alternating word $X = a_1 a_2 \dots a_k$ of trace 0 terms with $a_j \in B_1 * B_2$ for $j$ even and $a_j \in A$ for $j$ odd. It suffices to check that $\tau(X) = 0$. Without loss of generality, for any even index $j$ we may assume that $a_j \in B_1 * B_2$ is itself an alternating word of trace 0 letters from $B_1$ and $B_2$ respectively. Similarly, for any odd index $j$, we may assume $a_j$ is a power of $u_2u_1u_2^*$. After these assumptions, $X$ becomes an alternating word of trace 0 terms of the form $u_2u_1^iu_2^*$, or trace 0 terms from $B_1$ and $B_2$. 
    
    If it were the case such a product was an alternating word of trace 0 terms from $A_1$ and $A_2$, then by free independence of $A_1$ and $A_2$ we would have $\tau(X) = 0$. However, just like in the proof of Lemma \ref{weak FC property lemma}, at present, this might not be the case. More precisely, it might be the case that for some trace 0 term $b \in B_2$ the product $X$ contains one of the terms $u_2^* b$, $b u_2$, or $u^*_2 b u_2$. However, since $u_2 \in A_2$ was assumed to be orthogonal to $B_2$ we have $\tau(u_2^* b) = \tau(b u_2) = 0$, so that $u_2^* b$ and  $b u_2$ are themselves trace 0 terms in $A_i$. Similarly, since $A_2$ was abelian we have $u^*_2 b u_2 = b$ is also a trace 0 term in $A_2$. With this observation, it is clear that $X$ is still in fact an alternating word of trace 0 terms from $A_1$ and $A_2$, and hence $\tau(X) = 0$, as desired. 
\end{proof}

\begin{lemma}\label{Existence of orthogonal unitary lemma}
     Let $A$ be an abelian tracial von Neumann algebra, and let $B \subset A$ be a separable von Neumann subalgebra such that $A \not\prec_A B$ in the sense of Popa's intertwining theory from \cite{P06} (for instance, if $A$ is purely non-separable and $B$ is any separable subalgebra). Then there exists a unitary $u \in A$ such that $E_B^A(u) = 0$.
\end{lemma}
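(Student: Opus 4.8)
The plan is to reduce the statement to a measure-theoretic fact about the abelian algebra $A$ and the conditional expectation onto $B$, exploiting the hypothesis $A \not\prec_A B$. Write $A = L^\infty(X,\mu)$ and let $E = E_B^A$ be the trace-preserving conditional expectation onto $B$. The failure of intertwining $A \not\prec_A B$ says, by Popa's intertwining criterion applied to the identity inclusion (equivalently, by the standard reformulation: for every finite set $F \subset A$ and every $\varepsilon > 0$ there is a unitary $w \in A$ with $\|E_B^A(a^* w b)\|_2 < \varepsilon$ for all $a,b \in F$), that one cannot ``capture'' $A$ inside $B$ by any finite-dimensional data. I would first massage this into the single clean statement that I actually need: there exists a unitary $u \in A$ with $E_B^A(u) = 0$. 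Since $B$ is separable, $L^2(B)$ is separable, so $E$ has separable range; the obstruction to finding such a $u$ is precisely that $A$ might be ``mostly supported'' over a part of the base on which $B$ is everything.

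The key reduction is to the case where $B$ is finite-dimensional (or has atomic part), and then to a limiting/exhaustion argument. Concretely: disintegrate over $B = L^\infty(Y,\nu)$ via the map $\pi: X \to Y$ induced by the inclusion $B \subset A$, so that $E(f)(y) = \int f \, d\mu_y$ where $\{\mu_y\}$ is the disintegration of $\mu$. Finding $u \in \mathcal{U}(A)$ with $E(u) = 0$ amounts to choosing, for $\nu$-a.e.\ $y$, a unimodular function $u_y$ on the fiber $(\pi^{-1}(y), \mu_y)$ with $\int u_y \, d\mu_y = 0$, done measurably in $y$. On a fiber $(\pi^{-1}(y),\mu_y)$ this is possible if and only if the fiber is not a single atom — indeed, on a two-point space with weights $a \le b$ we can take values $e^{i\theta}$ and $1$ with $a e^{i\theta} + b = 0$ iff $a = b$; more robustly, on any non-atomic-or-multi-point fiber we can split the mass into two equal halves and put $+1$ on one, $-1$ on the other, provided the fiber admits a set of measure exactly $\tfrac12\mu_y(\pi^{-1}(y))$, which holds whenever the atom sizes don't obstruct it, and certainly when the fiber is diffuse or has at least two atoms of equal size. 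The point where $A \not\prec_A B$ enters is to rule out exactly the bad situation: if on a set of positive $\nu$-measure the fiber were a single atom, then $B$ would contain a nonzero projection $q$ with $qA = qB$, and that corner would give an intertwining $A \prec_A B$, a contradiction. So the hypothesis guarantees that $\nu$-a.e.\ fiber is ``big enough''; one then has to upgrade ``big enough'' to the existence of a balanced splitting, handling fibers whose atoms are incommensurable by a small perturbation argument (choosing phases rather than just $\pm 1$), and patch the fiberwise choices into a genuine element of $A$ using measurable selection.

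I expect the main obstacle to be the fiberwise construction and its measurability: on a fixed fiber with finitely or countably many atoms of arbitrary sizes one cannot always achieve $\int u_y\,d\mu_y = 0$ with $u_y$ a $\pm 1$ function, so one must allow general unimodular values and argue that the map $\lambda \mapsto |\sum_j \mu_y(\{a_j\}) e^{i\lambda_j}|$ attains $0$ whenever no single atom dominates more than half the fiber mass (a convexity/triangle-inequality fact), together with a measurable-selection theorem (e.g. Kuratowski–Ryll-Nardzewski) to make $y \mapsto u_y$ measurable. The case where $\nu$ itself has atoms on which $\pi^{-1}(y)$ is forced to be a single atom is exactly what $A\not\prec_A B$ excludes, so the logical skeleton is: (i) translate $A\not\prec_A B$ into ``no corner $q \in B$ with $qA = qB$''; (ii) deduce every fiber is non-degenerate; (iii) solve the balancing problem fiberwise via an elementary convexity argument; (iv) glue via measurable selection to get the desired $u \in \mathcal{U}(A)$ with $E_B^A(u) = 0$. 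The parenthetical special case ($A$ purely non-separable, $B$ separable) is immediate from (i) since any nonzero corner of a purely non-separable algebra is again purely non-separable, hence strictly larger than any separable $qB$.
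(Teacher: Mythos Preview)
Your overall architecture (disintegrate $A$ over $B$, solve a fiberwise balancing problem, glue measurably) matches the paper's, but there is a genuine gap in how you extract the fiberwise information from the hypothesis $A\not\prec_A B$.

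You translate $A\not\prec_A B$ as ``no nonzero $q\in B$ with $qA=qB$,'' and from this you only conclude that $\nu$-a.e.\ fiber is not a single atom. But as you yourself note, the fiberwise balancing problem --- finding a unimodular $u_y$ with $\int u_y\,d\mu_y=0$ --- is solvable precisely when no atom of $\mu_y$ has mass $>1/2$. ``Not a single atom'' does not give this: a fiber consisting of two atoms of masses $0.9$ and $0.1$ has no unimodular function of mean zero. Your ``small perturbation argument (choosing phases rather than just $\pm 1$)'' handles incommensurable atom sizes only under the no-dominant-atom hypothesis; it cannot manufacture that hypothesis. In fact your corner reformulation is also misstated: in the abelian setting $A\prec_A B$ is equivalent to the existence of a nonzero projection $e\in A$ (not necessarily in $B$) with $eA=eB$, and this already happens whenever a positive-measure set of fibers has \emph{any} atom, so the hypothesis is much stronger than you are using.

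The paper closes exactly this gap by working with the other characterization you quote at the outset. From $A\not\prec_A B$ one gets a sequence of unitaries $u_n\in A$ with $\|E_B(u_n)\|_2\to 0$; one then passes to the separable subalgebra $A_0$ generated by $B$ and the $u_n$ (this is also what makes disintegration over a standard Borel space legitimate --- a point your write-up skips). On $\nu$-a.e.\ fiber of $A_0$ over $B$ one has $\int u_n\,d\mu_y\to 0$ (after a subsequence), and the triangle inequality applied at an atom $x_0$ gives $\mu_y(\{x_0\})\le \mu_y(\pi^{-1}(y)\setminus\{x_0\})$, i.e.\ no dominant atom. That is precisely the input your step~(iii) needs. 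So the missing idea is: use the witnessing unitaries $u_n$ themselves, pushed into a separable intermediate algebra, to force the ``$\le 1/2$'' bound on every atom of every fiber; the corner formulation does not deliver this.
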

\begin{proof}
    Since $A \not\prec_A B$, there is a net of unitaries $(u_n) \subset A$ such that $\| E_B^A(u_n) \|_2 \to 0$. Because $B$ is separable, we may even assume $(u_n)_{n\in \mathbb{N}}$ is a sequence.

    Call $A_0 \subset A$ the separable von Neumann subalgebra of $A$ generated by $B$ and the unitaries $u_n$ for $n \geq 1$. Then we have $A_0 \cong L^\infty(X, \mu)$  and $B\cong L^\infty(Y, \nu)$ for some standard probability spaces $(X, \mu)$ and $(Y, \nu)$ respectively. Let $\pi: X \to Y$ be the map induced by the inclusion $B \subset A_0$. Then we let $\mu = \int_Y \mu_y d\nu(y)$ be the disintegration of $\mu$ with respect to $\pi$.

    With this notation, note that if $f \in A_0$ is viewed as a function $f(x)$ in $L^\infty(X, \mu)$, then $E_B^A(f) \in B$, viewed as a function $g(y)$ in $L^\infty(Y, \nu)$, is $g(y) = \int_{\pi^{-1}(y)} f(x) d\mu_y(x)$. With this in mind, view each of the unitaries $u_n \in A_0$, for $n \geq 1$, as functions $u_n(x) \in L^\infty(X,\mu)$ with $|u_n(x)| = 1$ for almost every $x$. Then the condition $\|E^A_B(u_n)\|_2 \to 0$ implies that for $\nu$-almost every $y$, $\int_{\pi^{-1}(y)} u_n(x) d\mu_y(x)$ tends to 0 as $n\to \infty$. Unpacking this convergence, we see that for $\nu$-almost every $y \in Y$, the space $L^\infty(\pi^{-1}(y), \mu_y)$ corresponding to the fiber over $y$ is either diffuse or every atom $\{x_0\}$ satisfies

    $$
    \mu_y(\{x_0\})
    \leq 
    \lim_{n \to \infty} \int_{\pi^{-1}(y) \setminus \{x_0\}} |u_n(x)| d\mu_y(x)
    =
    \mu_y(\pi^{-1}(y) \setminus \{x_0\})
    $$

    Because of this inequality, we see immediately that for $\nu$-almost every $y \in Y$, there is a function $u_y \in L^\infty(\pi^{-1}(y), \mu_y)$ with $|u_y| = 1$ that satisfies $\int_{\pi^{-1}(y)} u_y(x) d\mu_y(x) = 0$. We can then choose a unitary $u \in A_0$ that corresponds to the function $u(x) \in L^\infty(X, \mu)$ such that for $\nu$-almost every $y \in Y$ we have $u(x) = u_y(x)$ for all $x \in \pi^{-1}(y)$. This unitary $u$ satisfies $E_B^A(u) = 0$, as desired.     
\end{proof}

\begin{thrm}\label{non-separable weak FC theorem}
    For $2 \leq n \leq  \infty$, let $A_1, A_2, \dots, A_n$ be purely non-separable abelian tracial von Neumann algebras and $M = A_1 * \dots * A_n$. Then for any abelian von Neumann subalgebra $\A \subset M$, there is a diffuse abelian von Neumann subalgebra $\B \subset M$ that is freely independent from $\A$.
\end{thrm}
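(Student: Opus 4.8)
The plan is to reduce the statement to a finite-intertwining argument that produces a unitary $u_1\in A_1$ orthogonal (via $E^{A_1}_{B_1}$) to a well-chosen separable subalgebra of $A_1$, and then to conjugate a Haar unitary living in a different free factor by such a unitary, so as to invoke Lemma \ref{Lemma: second free independent haar unitary lemma}. Concretely: since $\A\subset M$ is abelian it is generated by a single (possibly non-separable) von Neumann subalgebra; pick a separable diffuse $\A_0\subset\A$ if $\A$ is diffuse, and if $\A$ has atoms restrict attention to a diffuse corner --- in either case it suffices to handle a \emph{separable} diffuse abelian subalgebra $\A_0\subset M$, because a $\B$ freely independent to $\A$ is in particular freely independent to $\A_0$, and conversely once $\A_0=\A$ is already all we need when $\A$ itself is separable; for the genuinely non-separable $\A$ one must still argue, but freeness is tested on separably-generated subalgebras, so working with an arbitrary separable subalgebra of $\A$ and taking an increasing union / Zorn argument at the end recovers the full $\A$. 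I would first isolate the clean case: $\A$ separable.

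For $\A$ separable, the key step is to write $M=A_1*(A_2*\cdots*A_n)$ and to move $\A$, up to unitary conjugacy, into a controlled position. Using Corollary \ref{nscorollary} together with Theorem \ref{thm: free reassemblies are freely complemented} is the natural tool: extend $\A$ to a purely non-separable singular MASA $A\subset M$ (possible because each $A_i$ is purely non-separable, so $M$ contains such MASAs, and any separable abelian subalgebra sits inside one by a maximality/cardinality argument as in \cite{BDIP23}), whence $A=\sum_i u_iA_ip_iu_i^*$ is a free reassembly. Then $M=A*Q$ for some $Q$, and inside $Q\cong L\mathbb F_\infty$ (or a free product of non-separable abelians) one easily finds a diffuse abelian $\B$ --- in fact a Haar unitary's algebra --- that is freely independent to $A\supset\A$. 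This already proves the theorem for separable $\A$, and arguably for all $\A$: a complement $Q$ to a purely non-separable singular MASA $A\supseteq\A$ works verbatim regardless of the separability of $\A$, since $\A\subset A$ and $A$ is free from $Q$.

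Alternatively, and more in the spirit of Section 3, one can give a direct construction avoiding the reassembly machinery: by Lemma \ref{Existence of orthogonal unitary lemma} applied with $A=A_2$ and $B=$ the separable subalgebra of $A_2$ generated by the $E^{A_2}$-components of a countable generating set of $\A$ (here $A_2\not\prec B$ since $A_2$ is purely non-separable and $B$ separable), obtain a unitary $u_2\in A_2$ with $E^B_{A_2}(u_2)=0$; pick any Haar unitary $u_1\in A_1$; then Lemma \ref{Lemma: second free independent haar unitary lemma} (with $B_1=A_1$ replaced by the relevant separable subalgebra, $B_2=B$) shows $u=u_2u_1u_2^*$ generates a diffuse abelian $\B$ free from $B_1*B_2\supseteq\A$. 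The main obstacle, in either route, is the bookkeeping to ensure that the separable subalgebra $B\subset A_2$ one extracts genuinely contains enough of $\A$ that ``free from $B_1*B_2$'' upgrades to ``free from $\A$'' --- i.e. that $\A\subset B_1*B_2$ after the conjugation. This requires first conjugating $\A$ into a position where it is generated by elements whose ``free supports'' are controlled; I expect this is where the real work lies, and where invoking Corollary \ref{nscorollary} to put $\A$ inside a free-reassembly MASA is the cleanest way to force the needed structure, after which the explicit complement $Q$ supplies $\B$ immediately.
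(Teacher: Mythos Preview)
Your first route has a genuine gap: you assert that an arbitrary abelian $\A\subset M$ can be extended to a \emph{purely non-separable} singular MASA $A$, and then invoke Corollary~\ref{nscorollary} to get $M=A*Q$. But no such extension need exist. If $\A$ is already a MASA with a nontrivial separable direct summand (which can certainly occur in $M$), there is nothing left to enlarge, and Corollary~\ref{nscorollary} simply does not apply. The hand-wave ``any separable abelian subalgebra sits inside one by a maximality/cardinality argument'' is unjustified even in the separable case, and outright false in the general case. Likewise, the ``increasing union / Zorn'' reduction to separable $\A$ is not a reduction: the $\B$ you build depends on the chosen separable subalgebra, and you need a single $\B$ free from all of $\A$ at once.

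The paper does not try to enlarge $\A$. Instead it enlarges $\A$ to a MASA, splits that MASA as $\A p_{\rm ns}\oplus \A p_{\rm s}$ into its purely non-separable and separable parts, and then handles the two pieces asymmetrically. The non-separable part $\A p_{\rm ns}$ is, by \cite{BDIP23}, a free reassembly $\sum_k u_k^*A_{i_k}p_ku_k$; one then uses Theorem~\ref{thm: free reassemblies are freely complemented} not to complement $\A$ directly, but to \emph{change the free product decomposition} $M=P_1*\cdots*P_n$ so that all of $\A p_{\rm ns}$ intertwines into the single factor $P_1$. After this move, the only remaining data---the separable piece $\A p_{\rm s}$ together with the countably many conjugating unitaries---is separable, hence by \cite{BDIP23} sits in $Q_1*\cdots*Q_n$ with each $Q_i\subset P_i$ separable. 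This yields the key containment $\A\subset P_1*Q_2*\cdots*Q_n$, with $Q_2,\dots,Q_n$ separable inside the purely non-separable $P_2,\dots,P_n$. Now Lemma~\ref{Existence of orthogonal unitary lemma} produces unitaries $v_i\in P_i$ orthogonal to $Q_i$ for $i\geq 2$, and Lemma~\ref{weak FC property lemma} (if $n\geq 3$) or Lemma~\ref{Lemma: second free independent haar unitary lemma} (if $n=2$) finishes. Your ``second approach'' is groping toward exactly this, but the step you flag as ``where the real work lies''---getting $\A$ inside something of the form $A_1*Q_2*\cdots*Q_n$ with the $Q_i$ separable---is precisely what the non-separable/separable splitting plus the change of free decomposition accomplishes, and you have not supplied it.
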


\begin{proof}
    Without loss of generality, we may assume that the abelian subalgebra $\A$ is a MASA. To begin, we decompose $\A$ into purely non-separable and separable components. That is, let $p_{\rm ns} + p_{\rm s} = 1$ be a partition of unity in $\A$ such that $\A p_{\rm ns}$ is purely non-separable and $\A p _{\rm s}$ is separable. 

    We claim that without loss of generality that we may assume $\A p_{\rm ns}$ fully intertwines inside the first generator MASA $A_1$ of $M$. To see this, notice since $\A p_{\rm ns}$ is a purely non-separable MASA of $p_{\rm ns} M p_{\rm ns}$, Corollary 3.6 of \cite{BDIP23} implies that for some set $K$ there exist projections $(p_k)_{k \in K} \subset \A p_{\rm ns}$ and unitaries $(u_k)_{k \in K}\subset M$ such that $\sum_{k \in K} p_k = p_{\rm ns}$ and for every $k \in K$, $u_k \A p_k u_k^* \subset A_{i_k}$ for some $1 \leq i_k \leq n$.  In fact, since $u_k \A p_k u_k^*$ is a MASA in $u_k p_k M p_k u_k^*$, we have for every $k \in K$, $u_k \A p_k u_k^* = A_{i_k} u_k p_k u_k^*$. In other words, $\A p_{\rm ns} = \sum_{k \in K} u_k^* A_{i_k} u_k p_k$. 

    For $1 \leq i \leq n$, let $e_i$ be the supremum of all projection $u_k p_{k}u_k^*$, for $k \in K$, such that $u_kp_ku_k^*$ belongs to $A_i$. Based on the form of $\A p_{\rm ns}$, we will have $\sum_{i=1}^n \tau(e_i) \leq 1$. Thus, we can find projections $f_i \in A_i$, for $1 \leq i \leq n$, with $e_i \leq f_i$ and $\sum_{i=1}^n \tau(f_i) = 1$. Pick unitaries $v_i \in M$, for $1 \leq i \leq n$, such that $\sum_{i = 1}^n v_i f_i v_i^* = 1$. Then consider the abelian subalgebra $P = \sum_{i = 1}^n v_i A_i f_i v_i^*$. By Theorem \ref{thm: free reassemblies are freely complemented}, $P$ will be freely complemented in $M$. In particular, based on the construction in the proof of Theorem \ref{thm: free reassemblies are freely complemented}, we can find purely non-separable abelian subalgebras $P = P_1,  P_2, \dots, P_n$ such that $M = P_1 * \cdots *P_n$. Moreover, by construction, we see that $\A  p_{\rm ns}$ fully intertwines into this first abelian subalgebra $P = P_1$. So indeed, after replacing the original generator MASAs $A_1, A_2, \dots, A_n$ with these new algebras $P_1, P_2, \dots, P_n$ we may assume $\A p_{\rm ns}$ fully intertwines into $A_1$. 

    So assume that $\A p_{\rm ns}$ fully intertwines into $A_1$. Then for some countable set $K$, there exist projections $(p_k)_{k \in K}$ in $A_1$ and unitaries $(u_k)_{k \in K}$ such that $\A p_{\rm ns} = \sum_{k \in K} u_k A_1 p_k u_k^*$. Let $Q$ be the separable subalgebra of $M$ generated by $\A p_{\rm s}$ and the unitaries $(u_k)_{k \in K}$. Then by a repeated application of Lemma 4.3 of \cite{BDIP23}, there exist separable subalgebras $Q_i \subset A_i$, for $1\leq i \leq n$ such that $Q \subset Q_1 * \cdots * Q_n$. In particular it follows that $\A \subset A_1 * Q_2 * \cdots *Q_n$.

    The result now follows from the earlier lemmas. Using Lemma \ref{Existence of orthogonal unitary lemma}, we can find unitaries $v_i \in A_i$, for $2 \leq i \leq n$, such that $E_{Q_i}^{A_i}(v_i) = 0$. If $n \geq 3$, it follows from Lemma \ref{weak FC property lemma} that $\B = \{v_2 v_3\}''$ is a diffuse abelian subalgebra freely independent to $\A$. Conversely, if $n = 2$, then take any Haar unitary $v_1 \in A_1$, which exists because $A_1$ is purely non-separable. Then by Lemma \ref{Lemma: second free independent haar unitary lemma}, the diffuse abelian subalgebra $\B = \{v_2v_1 v_2^*\}''$ will be freely independent to $\A$.
\end{proof}

In the proof of Theorem \ref{non-separable weak FC theorem}, notice that the assumption that the subalgebra $\A \subset M = A_1 * \cdots * A_n$ is abelian was only used to understand the structure of the purely non-separable part of $\A$. In fact, in the case $N \subset M$ is any separable algebra the same argument can be repeated. 

\begin{thrm}\label{Theorem: Separable Algebras have weak FC}
    For $2 \leq n \leq  \infty$, let $A_1, A_2, \dots, A_n$ be purely non-separable abelian tracial von Neumann algebras and $M = A_1 * \dots * A_n$. Then for any separable von Neumann subalgebra $\N \subset M$, there is a diffuse abelian von Neumann subalgebra $\B \subset M$ that is freely independent from $\N$.
\end{thrm}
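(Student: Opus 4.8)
The plan is to follow the proof of Theorem~\ref{non-separable weak FC theorem} while taking advantage of the observation made in the paragraph above: the only step in that proof which used abelianness of the subalgebra was the analysis of its purely non-separable corner via Corollary~3.6 of \cite{BDIP23}. Since $\N$ is separable it has no purely non-separable part (the projection $p_{\rm ns}$ is $0$), so that step is vacuous and the argument collapses to its second half.

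First I would trap $\N$ inside a free product of separable pieces of the generators. As $\N$ is separable it is generated by countably many elements, each a $\|\cdot\|_2$-limit of finite linear combinations of reduced words in the $A_i$; gathering the countably many letters that occur and applying Lemma~4.3 of \cite{BDIP23} repeatedly yields separable subalgebras $Q_i\subset A_i$, $1\le i\le n$, with $\N\subset Q_1*Q_2*\cdots*Q_n$ (for an index $i$ whose factor is untouched one simply takes $Q_i=\complex$, so the value of $n$, including $n=\infty$, is irrelevant). It therefore suffices to produce a diffuse abelian $\B$ free from $Q_1*\cdots*Q_n$.

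Next, since each $A_i$ is purely non-separable while each $Q_i$ is separable, we have $A_i\not\prec_{A_i}Q_i$, so Lemma~\ref{Existence of orthogonal unitary lemma} provides unitaries $v_i\in A_i$ with $E_{Q_i}^{A_i}(v_i)=0$; I only need $v_1$ and $v_2$. Setting $\B=\{v_1v_2\}''$, which is diffuse abelian by the computation in Lemma~\ref{weak FC property lemma}, that same lemma (applied with $B_1=Q_1$, $B_2=Q_2$ and the remaining factors carried along exactly as in the $n\ge 3$ step of Theorem~\ref{non-separable weak FC theorem}) shows $\B$ is free from $Q_1*\cdots*Q_n$, hence from $\N$.

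The reason this is genuinely simpler than the abelian theorem, and where the only care is needed, is the following. Here both $Q_1$ and $Q_2$ are \emph{proper} separable subalgebras of the non-separable $A_1,A_2$, so orthogonal unitaries exist in \emph{both} factors and the product $v_1v_2$ works directly for every $2\le n\le\infty$; there is no analogue of the situation in Theorem~\ref{non-separable weak FC theorem}, where the corner $\A p_{\rm ns}$ could exhaust all of $A_1$ and force the conjugation trick of Lemma~\ref{Lemma: second free independent haar unitary lemma} when $n=2$. The only genuine bookkeeping is to confirm that Lemma~\ref{weak FC property lemma} delivers freeness from the full $Q_1*\cdots*Q_n$ and not merely from $Q_1*Q_2$; this is immediate from its word-reduction proof, since at any junction a factor $v_ib$ or $bv_i$ with $b\in Q_i$ of trace $0$ is again a trace-$0$ element of $A_i$, so every reduced word collapses to an alternating word in the free generators and has trace $0$.
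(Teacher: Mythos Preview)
Your proposal is correct and follows essentially the same approach as the paper's proof: trap $\N$ in a free product of separable pieces $Q_i\subset A_i$, invoke Lemma~\ref{Existence of orthogonal unitary lemma} to produce orthogonal unitaries, and apply Lemma~\ref{weak FC property lemma} to obtain the free diffuse abelian $\B$. Your explicit justification that Lemma~\ref{weak FC property lemma} yields freeness from the full $Q_1*\cdots*Q_n$ (not merely $Q_1*Q_2$) is a detail the paper leaves implicit, but the arguments are otherwise identical.
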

\begin{proof}
    By repeated application of Lemma 3.4 of \cite{BDIP23}, there exists separable subalgebras $Q_i \subset A_i$, for $1\leq i \leq n$, such that $\N \subset Q_1 * \cdots *Q_n$. Applying Lemma \ref{Existence of orthogonal unitary lemma}, we can find unitaries $v_i \in N_i$, for $1 \leq i \leq n$ such that $E^{A_i}_{Q_i}(v_i) = 0$. Lemma \ref{weak FC property lemma} then implies there exists a diffuse abelian von Neumann subalgebra $\B \subset M$ freely independent to $\N$.
\end{proof}

\begin{remark}
    Consider a separable subalgebra $N \subset M  = A_1 * \cdots *A_n$ as in the statement of Theorem \ref{Theorem: Separable Algebras have weak FC}. Then although we can say there exists an abelian subalgebra $B \subset M$ that is freely independent to $N$, it cannot be the case that $M = N \vee B$. More precisely, a separable subalgebra $N \subset M$ cannot be freely complemented by an abelian algebra. To see this, suppose that $M = N * B$ for some (potentially non-separable) tracial von Neumann algebra $B$. Then we can compute the sans-rank of $B$, (as defined in Section 2 of \cite{BDIP23}) to be $r_{\rm ns}(B) = r_{\rm ns}(M) - r_{\rm ns}(N) = n$. In particular, since $r_{\rm ns}(B) > 1$ it must be that $B$ is not abelian. In fact, since $r_{\rm ns}(B) = r_{\rm ns}(M)$ it must be that every purely non-separable singular MASA of $M$ fully intertwines into $B$. In particular, each of the generator MASAs $A_i$, for $1 \leq i \leq n$, of $M$, each of which are maximal amenable by \cite{P83}, fully intertwine into $B$. So in fact, we even obtain that if a separable subalgebra $N \subset M$ is freely complemented by some $B$, then $B$ be must be non-amenable.
\end{remark}

%%%%%%%%%%%%%%%%%%%%%%%%%%%%%%%%%%%%%%%%%%%%%%%%%%%%%%%%%%%%%%%%%%%%%%%%%%%%%%%%%%%%%%%%%%%%%%%%%%%%%%%%%%%%%%%%%%%%%%%%%%%%%%%%%%%%%%%%%%%%%%%%%%%%%%%%

\end{document}